\documentclass[11pt]{amsart}
\usepackage{amsfonts, amsmath, amssymb, amsthm, amscd, color, bookmark,graphicx}
\usepackage[latin1]{inputenc}
\usepackage[shortlabels]{enumitem}
\usepackage{tikz-cd}

\usepackage{hyperref}
\hypersetup{
  pdfborder={0 0 0},
  colorlinks   = true, %Colours links instead of ugly boxes
  urlcolor     = blue, %Colour for external hyperlinks
  linkcolor    = blue, %Colour of internal links
  citecolor   = red %Colour of citations
}

\hoffset -1.55cm
\textheight=8.9in
\textwidth=6.4in
\voffset -1.2cm
%\tolerance=9000 \emergencystretch=5pt %\vfuzz=2pt
%\parskip=1.2mm

%\usepackage[notref,notcite]{showkeys}
%\renewcommand*\showkeyslabelformat[1]{\normalfont\tiny \ttfamily(#1)}

%\raggedbottom
\allowdisplaybreaks

% THEOREM Environments ---------------------------------------------------
\newtheorem{thm}{Theorem}[section]

\newtheorem{lemma}[thm]{Lemma}

\theoremstyle{definition}
\newtheorem{defn}[thm]{Definition}

\theoremstyle{remark}

\newtheorem{rem}[thm]{Remark}

\newcommand{\C}{\mathrm{C}}

\newcommand{\e}{\varepsilon}

\newcommand{\N }{\mathbb{N}}
\newcommand{\Z }{\mathbb{Z}}

\numberwithin{equation}{section}
%%%%%%%%%%%%%%%%%%%%%%%%%% To avoid ambiguities

%\renewcommand{\le }{\leq}
%\renewcommand{\ge }{\geq}

%%%%%%%%%%%%%%%%%%%%%%%%%%%

\begin{document}

\vspace*{-1cm}
\title{Some examples of invariably generated groups}

\author{Ashot Minasyan}
\address{School of Mathematical Sciences,
University of Southampton, Highfield, Southampton, SO17 1BJ, United
Kingdom.}
\email{aminasyan@gmail.com}
%
%\date{}

\begin{abstract}
A group $G$ is invariably generated (IG) if there is a subset $S \subseteq G$ such that for every subset $S' \subseteq G$, obtained from $S$ by replacing each element with a conjugate, $S'$ generates $G$. Likewise, $G$ is finitely invariably generated (FIG) if, in addition, one can choose such a subset $S$ to be finite.

In this note we construct a FIG group $G$ with an index $2$ subgroup $N \lhd G$ such that $N$ is not IG. This shows that neither property IG nor FIG is stable under passing to subgroups of finite index, answering questions of Wiegold and Kantor-Lubotzky-Shalev. We also produce first examples of finitely generated IG groups that are not FIG, answering a question of Cox.
\end{abstract}

\keywords{Invariably generated, finitely invariably generated, finite index subgroups}
\subjclass[2010]{20F67, 20F65, 20F06}

\maketitle
%\vspace{-0.5cm}
%\tableofcontents
%%%%%%%%%%%%%%%%%%%%%%%%%%%%%%%%%%%%%%%%%%%%%%%%%%%%%%%%%%%%%%%%%%%%%%%%%%%%%%%%%%%%%%%%%%%%%%
%%%%%%%%%%%%%%%%%%%%%%%%%%%%%%%%%%%%%%%%%%%%%%%%%%%%%%%%%%%%%%%%%%%%%%%%%%%%%%%%%%%%%%%%%%%%%%

\section{Introduction}
A subset $S$ \emph{invariably generates} a group $G$ if for every function $f:S \to G$ the subset $\{s^{f(s)} \mid s \in S\}$ is a generating set of $G$.
We say that $G$ is \emph{invariably generated} (IG) if it contains an invariable generating subset (equivalently, if $G$ invariably generates itself).
Similarly, $G$ is \emph{finitely invariably generated} (FIG) if it has a finite invariable generating subset.

The term ``invariably generated'' was invented by Dixon \cite{Dix} in 1988, though the notion itself appeared in the literature earlier. In 1975 Wiegold \cite{Wie1} considered the class of groups $\mathcal X$ such that $G \in \mathcal X$ if and only if for every transitive action of $G$ on a set
$\Omega$, with $|\Omega| \ge 2$,  at least one element $g \in G$ acts on $\Omega$ without fixed points. The fact that the latter property holds for all finite groups was proved by Jordan \cite{Jor} in 1872, more that 100 years earlier. Jordan's theorem was revisited by Serre \cite{Ser} in 2003, who gave several applications to Number Theory and Topology.

In \cite{Wie1} Wiegold observed the following.

\begin{rem}\label{rem:IG-equiv_def} For a group $G$ the following statements are equivalent:
\begin{itemize}
  \item $G \in \mathcal X$;
  \item for each proper subgroup $H <G$, $\bigcup_{g \in G} H^g \neq G$;
  \item if $S$ is a subset of $G$ containing a representative from each conjugacy class, then $G=\langle S \rangle$; in other words, $G$ is IG.
\end{itemize}
\end{rem}

Given two subsets $A,B$ of a group $G$ we will say that $A$ is \emph{pointwise conjugate into} $B$ if $A \subseteq B^G$, i.e., each
$a \in A$ is conjugate to some $b\in B$. The above remark tells us that $G$ is IG if and only if $G$ is not pointwise conjugate into a proper subgroup.
Wiegold proved that the class $\mathcal X$ of IG groups is closed under extensions and restricted direct products \cite{Wie1}. Since finite groups (by
Jordan's theorem \cite{Jor}) and abelian groups are in $\mathcal X$, it follows that all virtually solvable groups are IG. On the other hand, the easiest examples of non-IG groups are non-abelian free groups \cite{Wie1}.

In 2014 Kantor, Lubotzky and Shalev \cite{K-L-S} studied invariable generation for infinite linear groups and defined the property FIG.
Similarly to Remark~\ref{rem:IG-equiv_def}, a group is FIG if and only if there is a finite subset $S \subseteq G$ which is not pointwise conjugate into any proper subgroup of $G$.  One of the main results from \cite{K-L-S} states that a finitely generated linear group is FIG if and only if it is virtually solvable.
They also showed that the class of FIG groups is closed under extensions and contains all finitely generated abelian-by-polycyclic groups.
In fact, the argument from  \cite[Lemma 2.8]{K-L-S} (see also \cite[Theorem 2.1.(i)]{Wie1}) proves the following.

\begin{lemma}\label{lem:IG_for_ext}
Suppose that $G$ is a group, $N \leqslant K \leqslant G$ and $N \lhd G$. If $S$ is an invariable generating subset of $K$ and $T$
is an invariable generating subset of $G/N$ then $S \cup T_1$ is an invariable generating subset of $G$, where $T_1 \subseteq G$ is any preimage of $T$ in $G$.
\end{lemma}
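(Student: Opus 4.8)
The plan is to check the definition of an invariable generating set head-on. Fix an arbitrary function $f \colon S \cup T_1 \to G$, set $H \coloneq \langle x^{f(x)} \mid x \in S \cup T_1 \rangle \leqslant G$, and aim to prove $H = G$. I would do this in two steps: first show $HN = G$, and then show $N \leqslant H$, the latter being where the hypothesis $N \leqslant K$ (rather than merely $N \lhd K$) is used.

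For the first step, let $\pi \colon G \to G/N$ be the quotient homomorphism. Since $\pi(T_1) = T$, for each $t \in T$ I can choose some $t_1 \in T_1$ with $\pi(t_1) = t$ and observe that $\pi\bigl(t_1^{f(t_1)}\bigr) = t^{\,\pi(f(t_1))}$ is a conjugate of $t$ in $G/N$ lying in $\pi(H)$. Because $T$ invariably generates $G/N$, these elements (one for each $t \in T$) already generate $G/N$; hence $\pi(H) = G/N$, equivalently $HN = G$, and therefore also $NH = G$.

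For the second step, fix $x \in S$. Using $f(x) \in G = NH$, write $f(x) = n_x h_x$ with $n_x \in N$ and $h_x \in H$. Then $x^{f(x)} = h_x^{-1} n_x^{-1} x\, n_x h_x = \bigl(x^{n_x}\bigr)^{h_x}$, so $x^{n_x} = \bigl(x^{f(x)}\bigr)^{h_x^{-1}} \in H$, as both $x^{f(x)}$ and $h_x^{-1}$ lie in $H$. Since $n_x \in N \leqslant K$ and $S \subseteq K$, the assignment $x \mapsto n_x$ is a bona fide function $S \to K$, so invariable generation of $K$ by $S$ gives $K = \langle x^{n_x} \mid x \in S \rangle \leqslant H$. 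In particular $N \leqslant K \leqslant H$, and together with $HN = G$ this forces $H = G$.

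The step I expect to be the crux is the decomposition $f(x) = n_x h_x$ with the factor from $N$ on the \emph{left}: this is exactly what allows the unknown conjugator $f(x)$ to be absorbed into $H$, leaving only a conjugation by the element $n_x \in N \subseteq K$ --- the kind of conjugation controlled by the invariable generating set $S$ of $K$. For this it is essential that $N \leqslant K$ (so $n_x \in K$) and that $S \subseteq K$, and also that the two steps are carried out in this order, since the relation $G = NH$ needed in the second step is precisely the output of the first. No finiteness or convergence issues arise, as nothing is assumed finite or finitely generated.
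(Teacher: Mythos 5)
Your proof is correct and is exactly the standard extension argument that the paper relies on for this lemma (the paper gives no proof of its own, citing the argument of Kantor--Lubotzky--Shalev, Lemma 2.8, and Wiegold): first the conjugates of the lifted set $T_1$ generate $G$ modulo $N$, and then decomposing each conjugator of an element of $S$ over $G=HN$ reduces those conjugations to conjugations by elements of $N\leqslant K$, forcing $K\leqslant H$ and hence $H=G$. The only nit is notational: the paper's convention is $x^y=yxy^{-1}$, so with that convention you should factor $f(x)=h_x n_x$ with $h_x\in H$, $n_x\in N$ (using $G=HN$) rather than $f(x)=n_x h_x$; the argument is otherwise unchanged.
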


This lemma easily yields that each of the  properties IG and FIG is inherited by finite index overgroups. Wiegold \cite[p. 573]{Wie2} and Kantor, Lubotzky, Shalev \cite[Open Problems 1,2]{K-L-S} asked whether the same can be said for finite index subgroups. Our first result gives a negative answer to these two questions.

\begin{thm}\label{thm:fin_ind} There exists a torsion-free group $G$ with a subgroup $N \lhd G$, of index $2$, such that $G$ is invariably generated by two elements but $N$ is not invariably generated. More precisely, there are two elements $a, b \in G$ such that all of the following hold.
\begin{itemize}
  \item[(i)] $\{a,b\}$ invariably generates $G$;
  \item[(ii)] the subgroup $N=\langle a^2,b,aba^{-1} \rangle$ has index $2$ in $G$;
  \item[(iii)] $H=\langle a^2,b \rangle$ is free of rank $2$, freely generated by $\{a^2,b\}$, and $|N:H|=\infty$;
  \item[(iv)] $N= \bigcup_{f \in N} H^f$.
 \end{itemize}
\end{thm}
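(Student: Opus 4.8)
The plan is to realize $G$ as a quotient of the free group $F=F(a,b)$, imposing only relators lying in the kernel of the homomorphism $\pi\colon F\to\mathbb{Z}/2\mathbb{Z}$ with $\pi(a)=1$, $\pi(b)=0$. Then $\pi$ descends to an epimorphism $G\to\mathbb{Z}/2\mathbb{Z}$, its kernel $N$ has index $2$, and the Reidemeister--Schreier process applied to the Schreier transversal $\{1,a\}$ shows at once that $N=\langle a^2,b,aba^{-1}\rangle$, giving (ii). The same computation, applied to $M=\langle a^g,b^h\rangle$ with transversal $\{1,a^g\}$, shows $MN=G$ and $M\cap N=\langle (a^2)^g,\ b^h,\ a^gb^ha^{-g}\rangle$, so that $M=G$ whenever $\langle (a^2)^g,\ b^h,\ a^gb^ha^{-g}\rangle=N$. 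Finally, (iv) together with (iii) and Remark~\ref{rem:IG-equiv_def} instantly yields that $N$ is not IG, since $N$ is then pointwise conjugate into its proper subgroup $H$. So what remains is to engineer (iv), the persistence of ``$H=\langle a^2,b\rangle$ is free of rank $2$ of infinite index in $N$'', and the equality $\langle (a^2)^g, b^h, a^gb^ha^{-g}\rangle=N$ for all $g,h\in G$ (which yields (i)).

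These already hold in $F$ itself: a Stallings-graph computation shows that in $F(a,b)$ the subgroups $\langle a^2,b\rangle$ and $a\langle a^2,b\rangle a^{-1}=\langle a^2,aba^{-1}\rangle$ are free of rank $2$ and satisfy $\langle a^2,b\rangle\cap\langle a^2,aba^{-1}\rangle=\langle a^2\rangle$; consequently the $\langle a^2,aba^{-1}\rangle$-orbit of the coset $H$ inside $N/H$ has size $[\langle a^2,aba^{-1}\rangle:\langle a^2\rangle]=\infty$, so $[N:H]=\infty$. I would then build $G$ as a direct limit $F=G_0\twoheadrightarrow G_1\twoheadrightarrow G_2\twoheadrightarrow\cdots$ of torsion-free hyperbolic groups, with $G_{i+1}=G_i/\langle\langle r_i\rangle\rangle$ where $r_i$ satisfies a graded small-cancellation condition over $G_i$ (in the style of Ol'shanskii and Osin), dovetailing two families of tasks and performing one per step. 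Writing $H_i,N_i$ for the images in $G_i$ of $\langle a^2,b\rangle$ and of $N$: a type (a) task is indexed by an element $w$ of $N_0\le F$ and imposes a relator $t_iwt_i^{-1}h_i^{-1}$ with $t_i\in N_i$ and $h_i\in H_i$, so that in $G_{i+1}$ the image of $w$ becomes conjugate, by the element $t_i\in N_{i+1}$, to $h_i\in H_{i+1}$; a type (b) task is indexed by a pair $(g,h)\in F\times F$ and imposes relators forcing each of $a^2,b,aba^{-1}$ into $\langle (a^2)^g, b^h, a^gb^ha^{-g}\rangle$. All these relators lie in $N_0=\ker\pi$. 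The choice of the $t_i,h_i$ and of the words in the type (b) relators is extremely flexible, so I can take all of them long and in general position with respect to $H_i$, to $aH_ia^{-1}$, and to the previously chosen relators, so that the small-cancellation machinery keeps $G_{i+1}$ torsion-free and hyperbolic, keeps $\langle a^2,b\rangle\to G_{i+1}$ injective, keeps $H_{i+1}\cap aH_{i+1}a^{-1}=\langle a^2\rangle$ (hence $[N_{i+1}:H_{i+1}]=\infty$), and keeps $G_{i+1}=\langle a,b\rangle$.

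Passing to $G=\varinjlim G_i$ is then formal. Every $w\in N$ and every pair $(g,h)$ arises from something already present in $G_0=F$, its task is performed at some finite stage, and the resulting relation persists in all subsequent quotients and hence in $G$; torsion-freeness survives the limit because any torsion element would already be detected at a finite stage, $2$-generation survives because no generators are ever added, and injectivity of $H$ together with $H\cap aHa^{-1}=\langle a^2\rangle$ survive because the small-cancellation theory is arranged to propagate them. Thus $H$ is free of rank $2$ of infinite index in $N$, which is (iii); every element of $N$ is conjugate in $N$ into $H$, i.e.\ $N=\bigcup_{f\in N}H^f$, which is (iv) and, via Remark~\ref{rem:IG-equiv_def}, shows $N$ is not IG; and for every $g,h$ we get $\langle (a^2)^g,b^h,a^gb^ha^{-g}\rangle=N$, so $M=\langle a^g,b^h\rangle=G$, i.e.\ $\{a,b\}$ invariably generates $G$, which is (i).

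The main obstacle is the combined bookkeeping inside the construction. Type (a) relators push all of $N$ into conjugates of $H$, which pulls $H$ toward being large; one must choose the $t_i,h_i$ generically enough that $H$ stays free, stays of infinite index (equivalently $H\cap aHa^{-1}$ stays equal to $\langle a^2\rangle$), and the covering $N=\bigcup_f H^f$ is not accidentally destroyed at a later stage. Dually, type (b) relators make $G$ rigidly $2$-generated, and one has to check that they do not conspire with the type (a) relators to collapse $N=\bigcup_f H^f$ or to create a proper subgroup of $G$ capturing conjugates of both $a$ and $b$. Producing a single sequence $(r_i)$ that completes every scheduled task while meeting the graded small-cancellation requirements at each stage — so that all the quasiconvexity, injectivity, subgroup-intersection and index statements above hold simultaneously and pass to the limit — is the technical heart of the argument; the rest is routine.
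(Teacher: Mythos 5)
Your global architecture is the same as the paper's (a direct limit of torsion-free hyperbolic small-cancellation quotients of $F(a,b)$, all relators inside $\ker\pi$ so the index-$2$ structure and the identification $N=\langle a^2,b,aba^{-1}\rangle$ persist, tasks scheduled by enumeration, and (i)--(iv) deduced at the end exactly as in the paper), but the central mechanism you propose for condition (iv) has a genuine gap. Your type (a) step imposes a relator $t_iwt_i^{-1}h_i^{-1}$ in which the ``generic'' part $h_i$ is taken \emph{inside} $H_i$ (and ``long and in general position''), while simultaneously demanding that the small-cancellation machinery keep $H\to G_{i+1}$ injective. No version of the machinery you invoke delivers this: in Ol'shanskii-type quotient theorems (Theorem~\ref{thm:pres_qc} here, and likewise the Osin/Hull versions) the variable part of each relator must be drawn from a non-elementary subgroup $F$ relative to which the quasiconvex set $Q$ being preserved is \emph{small}, and smallness fails outright when the variable part lies in $H$ or any conjugate of it: if $H\subseteq Q$ and $1\in Q$ then $tHt^{-1}\subseteq \{t\}Q^{-1}Q\{t^{-1}\}$, so $Q$ is not small relative to $tHt^{-1}$. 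So ``choose $h_i$ generic in $H_i$ and let small cancellation preserve $H$'' is precisely the step that is not available, and it is not a bookkeeping issue one can defer. There is also a structural obstruction to your choice of target: elements $w\in N$ that are proper powers with maximal root outside $N$ (e.g.\ $w=(ab)^2$) cannot be conjugated onto a ``general position'' element $h\in H$ without giving $h$ a root outside $N$ in the quotient, which wrecks the centralizer control (the analogue of \eqref{eq:4}) that any such induction needs in order to verify the hypotheses of the quotient theorem at later stages.

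The paper's proof contains exactly the missing idea. Conjugacy of $w$ into the desired set is \emph{not} created by a quotient at all: one first embeds $G_n$ into an HNN-extension $L_n$ whose stable letter $t$ (lying in the index-$2$ kernel $M_n$) conjugates a carefully chosen maximal cyclic subgroup --- $\langle a_n\rangle$ when the root $d$ of $w$ lies outside $N_n$, or a malnormal cyclic $\langle e\rangle\leqslant H_n$ when $d\in N_n$ --- onto $\langle d\rangle$ (Lemmas~\ref{lem:elem}, \ref{lem:HNN-hyp}, \ref{lem:HNN-conj}); this is why the target must be $H\cup\langle a\rangle$ rather than $H$ alone, with $\langle a\rangle^N\cap N=\langle a^2\rangle^N\subseteq H^N$ recovering (iv). Only afterwards is a quotient taken, killing $t$ and $b_n$ into the subgroup $F=\langle a_n,b_n^{c}\rangle\cap N_n$, which simultaneously produces the invariable-generation relation $G_{n+1}=\langle a_{n+1},b_{n+1}^{g_{n+1}}\rangle$ (your type (b) task, done with a single relator rather than three) and keeps $t$'s image in $N_{n+1}$. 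The admissibility of this quotient is what requires the inductively maintained invariants \eqref{eq:4}--\eqref{eq:6} and the identity $H\cap H^a=\langle a^2\rangle$, via Theorem~\ref{thm:small-crit}, to show $Q_n=H_n\cup H_n^{a_n}$ is small relative to $F$. Your proposal contains no substitute for the HNN detour and no analogue of these invariants, so the ``technical heart'' you defer is not routine verification of general position but a different construction.
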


Note that the first example of a non-IG subgroup of an IG group was given by Wiegold in \cite{Wie2}.

More examples of IG, FIG and non-IG infinite groups have been recently obtained in \cite{Gel,G-G-J,B-Fuj,Cox}.
Property FIG implies IG, by definition; in fact, FIG is strictly stronger as any non-finitely generated abelian group is IG but not FIG. However, prior to this work none of the known examples answered the following basic question, asked by Cox in \cite{Cox}: is property IG equivalent to FIG for finitely generated groups? Our second theorem shows that this is not the case.

\begin{thm}\label{thm:IG_not_FIG}
There exists a  finitely generated torsion-free group $G$ which is IG but not FIG. More precisely, $G$ is generated by two elements and
there is an infinite family of
subgroups $H_j < G$, $j=0,1,2,\dots,$ such that all of the following conditions are satisfied.
\begin{itemize}
  \item[(i)] For all $j \in \N\cup \{0\}$, $H_j$ is a malnormal finitely generated  free subgroup of $G$;
  \item[(ii)] for each $i \in \N$ there exists  $r \in H_i\setminus\{1\}$ such that $\langle r \rangle \cap gH_jg^{-1}=\{1\}$, for any
        $j<i$ and all $g \in G$;
  \item[(iii)] every finite subset of $G$ is pointwise conjugate into $H_j$, for some $j \in \N \cup \{0\}$;
  \item[(iv)] the intersection $hH_j h^{-1} \cap H_i$ is cyclic, for all $i \neq j$ and all $h \in G$;
  \item[(v)] if $M $ is a proper subgroup of $G$,  then there exists $j \in \N \cup \{0\}$ and $v \in G$
such that $M \subseteq v H_j v^{-1}$.
 \end{itemize}
\end{thm}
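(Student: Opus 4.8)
The plan is to construct $G$ by hand and then deduce ``IG but not FIG'' purely formally from (i)--(v). Grant a two-generated, torsion-free group $G$ with subgroups $H_j$ as in the statement, the $H_j$ being proper (the construction will make them of infinite index). Any finite $S \subseteq G$ is, by (iii), pointwise conjugate into some $H_j$, hence into a proper subgroup, so $S$ is not an invariable generating set; thus $G$ is not FIG. Conversely, let $M < G$ be proper. By (v), $M \subseteq vH_jv^{-1}$ for some $j$ and $v$; applying (ii) with $i = j+1$ gives $r \in H_{j+1}\setminus\{1\}$ with $\langle r\rangle$ meeting every conjugate of $H_j$ trivially, so $r$ lies in no conjugate of $H_j$ (else $\langle r\rangle$ would meet that conjugate nontrivially). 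Since $\bigcup_{g\in G}H_j^g$ is invariant under conjugation and contains $\bigcup_{g\in G}M^g$, the whole conjugacy class of $r$ misses $\bigcup_{g\in G}M^g$; in particular $\bigcup_{g\in G}M^g \neq G$. As $M$ was an arbitrary proper subgroup, $G$ is IG by Remark~\ref{rem:IG-equiv_def}. So everything reduces to constructing such a $G$.

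To construct $G$ I would use an iterated small-cancellation argument modelled on Ol'shanskii's ``monster'' constructions --- graded small cancellation over hyperbolic groups; its descendants (small cancellation over relatively hyperbolic groups, group-theoretic Dehn filling) would serve equally well. One builds a chain of torsion-free hyperbolic groups and epimorphisms $F(a,b) = G_0 \twoheadrightarrow G_1 \twoheadrightarrow G_2 \twoheadrightarrow \cdots$, each $G_{n+1}$ obtained from $G_n$ by imposing finitely many relators that satisfy a strong small-cancellation condition relative to $G_n$ and are not proper powers, and puts $G = \varinjlim G_n$; this keeps $G$ two-generated and torsion-free. At suitable stages one \emph{designates} finitely generated free subgroups of the $G_n$ whose images in $G$ will be the $H_j$, always choosing the new relators to be long words in ``general position'' with respect to all previously designated subgroups and all earlier relators. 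Ol'shanskii's analysis of subgroups of such graded quotients then ensures that each $H_j$ remains free of finite rank and malnormal, giving (i); that conjugates of distinct $H_i, H_j$ meet only in cyclic subgroups, giving (iv); and, by placing into each newly designated $H_i$ an element $r$ whose defining word is transversal to $H_0,\dots,H_{i-1}$, that $r$ lies in no conjugate of any earlier $H_j$, giving (ii).

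Properties (iii) and (v) are the point of the surgeries. Fix enumerations of all finite subsets, and of all finitely generated ``candidate'' subgroups, of the eventual group. At the step handling a finite set $S_n$ one adjoins relators identifying each element of $S_n$ with a conjugate of an element of a (possibly newly designated) $H_j$ --- the surgery that forces (iii). At the step handling a candidate subgroup $L_n$ one either adjoins relators collapsing $L_n$ onto all of $G_{n+1}$ (when $L_n$ is ``large'') or relators pushing $L_n$ into a conjugate of some $H_j$; since every proper subgroup of the limit is the directed union of its finitely generated subgroups, this yields (v). After each surgery one rechecks that all previously designated $H_k$ still embed, remain malnormal and pairwise transversal, and still contain their designated ``escaping'' elements --- guaranteed, again, by the genericity of the relators.

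The hard part is the tension between the two jobs the relators must do. To force $S_n$ (or $L_n$) into a conjugate of $H_j$ the new relators must interact \emph{strongly} with $S_n$ (resp.\ $L_n$) and with $H_j$; but to preserve hyperbolicity, torsion-freeness, and all of (i), (ii), (iv) --- the embeddings $H_k \hookrightarrow G$, their malnormality, the transversality, the escaping elements --- those same relators must satisfy a strong small-cancellation condition \emph{against} every $H_k$ and every earlier relator. Reconciling these demands --- ordering the countably many tasks so they can all be met, and producing at each stage relators that are simultaneously tame enough to perform the intended absorption and generic enough to preserve everything built so far --- is the delicate core of the proof, and is where essentially all the technical work lies.
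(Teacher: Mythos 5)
Your formal deduction that (i)--(v) imply ``IG but not FIG'' is correct and is exactly the reduction the paper makes after stating the theorem. The construction sketch, however, has two concrete gaps. First, your route to (v): you reduce to finitely generated subgroups and then assert that, since a proper subgroup $M<G$ is the directed union of its finitely generated subgroups, (v) follows. It does not: each finitely generated piece of $M$ lands in \emph{some} conjugate of \emph{some} $H_j$, but both the index $j$ and the conjugator may vary with the piece, so no single conjugate of a single $H_j$ is produced. This is precisely where properties (i) and (iv) earn their place in the statement: the paper fixes a non-abelian two-generated $N=\langle x,y\rangle\leqslant M$ with $N\subseteq H_j^v$, and for each $z\in M$ puts $\langle x,y,z\rangle\subseteq H_i^u$; since $N\subseteq H_j^v\cap H_i^u$ is non-cyclic, (iv) forces $i=j$ and malnormality forces $H_j^u=H_j^v$, whence $z\in H_j^v$ (cyclic $M$ is handled separately via (iii)). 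Your proposal contains no substitute for this step.

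Second, your mechanism at finite stages for (v) --- ``adjoin relators pushing $L_n$ into a conjugate of some $H_j$'' when the candidate subgroup is not collapsed onto everything --- is not something the small cancellation machinery supports: in quotient theorems of Ol'shanskii type (Theorem~\ref{thm:pres_qc} here) the ``tails'' of the new relators must lie in a non-elementary subgroup $F$ relative to which the preserved quasiconvex set $Q$ is small, and $Q$ contains the $H_j$'s, so one cannot take $F$ inside (a conjugate of) $H_j$; such relators would destroy the injectivity/malnormality of the very subgroup you are aiming at. The paper needs no surgery in this case: either the image $Z$ of the candidate subgroup is non-elementary and $Q$ is small relative to $Z$, in which case $F=Z$ and $Z$ maps \emph{onto} the next quotient, or else this fails, and the characterization of smallness (Theorem~\ref{thm:small-crit}) shows $Z$ is cyclic or already has a finite-index subgroup in a conjugate of one of the preserved subgroups --- recorded as alternative \eqref{eq:28} --- which malnormality later upgrades to genuine containment. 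Relatedly, for (iii) the paper does not identify elements of $S_n$ with conjugates of elements of a pre-existing $H_j$; it enlarges $G_n$ to an HNN-extension of $G_n * X$ over a fresh free group $X$, conjugating new generators to generators of centralizers (to keep the associated subgroups malnormal), and then takes $H_{n+1,n+1}$ to be the image of $X$ --- this is what makes the preservation of (i), (ii), (iv) checkable rather than a matter of unspecified ``genericity''. As written, your proposal defers exactly these verifications, which are the substance of the proof.
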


Recall that a subgroup $H$ of a group $G$ is called \emph{malnormal} if $gHg^{-1} \cap H=\{1\}$ for all $g \in G \setminus H$. The fact that any group $G$ satisfying properties
(i)--(v) from Theorem~\ref{thm:IG_not_FIG} is not FIG follows from  (iii). Condition (ii) shows that $G \neq \bigcup_{g \in G} H_j^g$ for any given $j \in \N \cup\{0\}$, and since each proper subgroup of $G$
is conjugate into some $H_j$, by (v), we can use Remark~\ref{rem:IG-equiv_def} to conclude that $G$ is IG.

Thus even for finitely generated groups property FIG is more restrictive than property IG.
In view of Theorems~\ref{thm:fin_ind} and \ref{thm:IG_not_FIG}, \cite[Corollary A]{Cox} implies that every finitely generated group can be embedded in a FIG group that has a non-IG subgroup of finite index or in a finitely generated IG non-FIG group. In particular, there is a continuum of such groups.

After this paper was written, the author learned that Goffer and Lazarovich obtained similar results to Theorems~\ref{thm:fin_ind},\ref{thm:IG_not_FIG} independently in \cite{G-L}.
Their work also answers the questions of Wiegold, Kantor-Lubotzky-Shalev and Cox mentioned above. Both our paper and \cite{G-L} are based on
the small cancellation theory over hyperbolic groups developed by Ol'shanskii \cite{Olsh}. However, \cite{G-L} works with this theory directly, while our approach
uses the author's previous results from \cite{Min-G-sbgps}.

\subsection{Ideas of proofs} Let us briefly outline the constructions of the groups from Theorems~\ref{thm:fin_ind} and \ref{thm:IG_not_FIG}. In both of them the group $G$ is obtained as a direct limit of torsion-free hyperbolic groups, using a ``small cancellation quotient theorem''. In the original form such a theorem was proposed by Gromov \cite[5.5]{Gromov}, and then proved by Ol'shanskii in \cite{Olsh}. The statement has later been extended to relatively hyperbolic groups by Osin \cite{Osin} and then to acylindrically hyperbolic groups by Hull \cite{Hull}. The version that we will use here was obtained by the author  in \cite{Min-G-sbgps, Min-thesis} and allows to preserve certain quasiconvex subgroups in the small cancellation quotient: see Theorem~\ref{thm:pres_qc} below.
This version has an advantage over the versions from \cite{Osin,Hull} in
not having to require that the preserved collection of quasiconvex subgroups be malnormal in the ambient hyperbolic group, which is important for the constructions that we present here.

To prove Theorem~\ref{thm:fin_ind} we start with a group $G_0$, freely generated by $\{a_0,b_0\}$, let
$H_0=\langle a_0^2,b_0\rangle$ and $N_0=\langle a_0^2,b_0,b_0^{a_0}\rangle \lhd G_0$. We enumerate all elements of $G_0$: $f_0=1$, $f_1,f_2,\dots$, and construct non-elementary torsion-free hyperbolic groups $G_1,G_2,\dots$ by induction, so that for each  $G_{n+1}$ is a quotient of $G_{n}$, $n \ge 0$.
To obtain $G_{n+1}$ from $G_n$, together with an  epimorphism $\phi_{n+1}:G_n \to G_{n+1}$,
we first embed $G_n$ into an HNN-extension $L_n$, in which the image of $f_{n+1}$ is conjugate to an element of the image of $H_0 \cup \langle a_0\rangle$. We then use Theorem~\ref{thm:pres_qc} to produce a torsion-free hyperbolic group $G_{n+1}$, and an epimorphism $\eta_{n+1}:L_n \to G_{n+1}$ such that $\eta_{n+1}$
is injective on the image of $H_0$ in $L_n$ and $G_{n+1}$ is generated by the images of $a_0$ and $b_0^{f_{n+1}}$. In particular, $\eta_{n+1}(G_n)=G_{n+1}$ and we set
$\phi_{n+1}:G_n \to G_{n+1}$ to be the restriction of $\eta_{n+1}$ to $G_n$. We define the epimorphism $\psi_{n+1}:G_0 \to G_{n+1}$ by $\psi_{n+1}=\phi_{n+1} \circ \dots\circ \phi_1$, and set
$a_{n+1}=\psi_{n+1}(a_0)$, $b_{n+1}=\psi_{n+1}(b_0)$, $H_{n+1}=\psi_{n+1}(H_0)$ and $N_{n+1}=\psi_{n+1}(N_0)$.
By carefully controlling  the kernel of $\eta_{n+1}$ we ensure that
$N_{n+1}$ still has index $2$ in $G_{n+1}$ and $\psi_{n+1}(f_{n+1})$ is conjugate into $H_{n+1} \cup \langle a_{n+1}\rangle$ via an element of $N_{n+1}$ in $G_{n+1}$.
Finally we define $G$ as the direct limit of the sequence $(G_n)_{n=0}^\infty$, let $a,b \in G$ and $H,N \leqslant G$ be the natural images of $a_0, b_0 \in G_0$ and $H_0,N_0 \leqslant G_0$ respectively.
By construction, every $G_i$ is generated by $a_i$ and $b_i^{\psi_i(f_i)}$, whence $G=\langle a,b^f\rangle$ for all $f \in G$, so that $\{a,b\}$ is an invariable generating set of $G$. On the other hand, the condition that $\psi_i(f_i) \in H_i^{N_i} \cup \langle a_i \rangle ^{N_i}$ in $G_i$, $i \in \N$, is sufficient for showing that $N =\bigcup_{f \in N} H^f$.
The actual argument is  somewhat technical, since various conditions need to be preserved at each step in order for the inductive argument to work and for Theorem~\ref{thm:pres_qc} to be applicable.

The proof of  Theorem~\ref{thm:IG_not_FIG} is more involved because the difference between properties IG and FIG is quite subtle. We start with a free group $G_0$ of rank $3$, set $H_{00}=\{1\}$, and enumerate all of the finite subsets $S_0=\emptyset$, $S_1,S_2, \dots$, and all of the
finitely generated subgroups $Y_0=\{1\}$, $Y_1,Y_2,\dots$ in $G_0$.
The torsion-free hyperbolic group $G_{n+1}$, together with a collection of malnormal quasiconvex subgroups $H_{n+1,j} \leqslant G_{n+1}$, $j=0,\dots,n+1$,
are constructed by induction on $n$. We define $L_n$ as an HNN-extension of the free product $G_n*X$, where $X$ is a free group of finite rank
and the image of $S_{n+1}$ is pointwise conjugate into $X$ in $L_n$. Thereafter $G_{n+1}$  is produced as a small cancellation quotient of $L_n$
in such a way that the natural homomorphism $\eta_{n+1}:L_n \to G_{n+1}$ is surjective on $G_n$ and on the image of $Y_{n+1}$ in $G_n$, unless this image is
cyclic or is contained in a conjugate of $H_{nj}$ in $L_n$, for some $j=0,\dots,n$. We let $H_{n+1,j} \leqslant G_{n+1}$ be the image of $H_{nj}$, for $j \leq n$, and $H_{n+1,n+1}=\eta_{n+1}(X)$. As before, $G$ is defined as the direct limit of the sequence $(G_n)_{n=0}^\infty$, and for each $j \ge 0$, $H_j$
is defined as the image of $H_{jj}$ under the natural epimorphism $G_j \to G$. It is straightforward from the construction that every proper finitely
generated subgroup of $G$ (being the image of some $Y_j$), will be contained in a conjugate of $H_j$, for some $j \ge 0$. The much stronger condition (v)
is proved by combining the malnormality of each subgroup $H_{jj} \leqslant G_j$ with property (iv) from the claim of the theorem.

The constructions we use to prove Theorems~\ref{thm:fin_ind},\ref{thm:IG_not_FIG} are fairly flexible and one can produce examples with additional properties: see Remarks~\ref{rem:cyc_centr-1},\ref{rem:add_props_Thm1} and \ref{rem:cyc_centr-2}.

\subsection*{Acknowledgements} The author would like to thank Charles Cox whose talk on the paper \cite{Cox} motivated this work, and who gave useful comments on a preliminary version of
this article. The author also thanks the anonymous referee for many useful suggestions which led to improvements of the exposition.

\section{Preliminaries}
\subsection{Notation and terminology}
In this paper we will denote by $\Z$ the set of all integers, by $\N=\{1,2,\dots\}$ the set of all natural numbers, and by $2\N=\{2,4,\dots\}$ the set of all even natural numbers.

Let $G$ be a group, $x,y \in G$ be any elements and  $A,B \subseteq G$ be any subsets. We will use the following notation:
\[x^y=yxy^{-1},~~ A^x=xAx^{-1},~~ A^B=\{bab^{-1} \mid a \in A, b \in B \}.\]

We will say that $x$ \emph{is commensurable to} $y$ in $G$, $x \stackrel{G}{\approx} y$, if some non-zero power of $x$ is conjugate to some non-zero power of $y$ in $G$. Note that $\stackrel{G}{\approx}$ is an equivalence relation on $G$.

As usual, $\C_G(x)=\{g \in G \mid gx=xg\}$ will denote the \emph{centralizer} of $x$ in $G$.

\begin{defn}\label{df:pres_conj}
Let $\eta:L \to G$ be a homomorphism between groups $L$ and $G$, and let $Q \subseteq L$ be any subset. We will say that \emph{$\eta$ preserves conjugacy on $Q$}
if for all $x,y \in Q$, $\eta(y) \in \eta(x)^G$ in $G$ implies that $y \in x^L$ in $L$. We will also say that \emph{$\eta$ preserves centralizers on $Q$} if
$\C_G(\eta(x))=\eta(\C_L(x))$ for all $x \in Q$.
\end{defn}

Finally, recall that a group is called \emph{elementary} if it has a cyclic subgroup of finite index. Throughout the paper we will often use the basic fact that
a torsion-free elementary group is necessarily cyclic.

\subsection{One subgroup of a free group}
The following elementary facts about a specific subgroup of a free group of rank $2$ will be used in the proof of Theorem~\ref{thm:fin_ind}.

\begin{lemma}\label{lem:H-props} Let $G$ be a free group of rank $2$ with free generating set $\{a,b\}$,  let $H=\langle a^2,b\rangle \leqslant G$ and
$N=\langle a^2,b,b^a \rangle\leqslant G$.
Then
\begin{itemize}
  \item[(i)] $N \lhd G$ and $|G:N|=2$, in particular, $|N:H|=\infty$;
  \item[(ii)] $H \cap H^{a}=\langle a^2 \rangle$; %and $H \cap \langle b^a \rangle=\{1\}$;
  \item[(iii)] if $a^l  \in h^G$, for some $h \in H$ and some  $l \in \Z$, then $a^l \in h^H$;
  \item[(iv)] $\C_G(a^l)=\langle a \rangle$ for any $l \in \Z\setminus\{0\}$, and $\C_G(u) \subseteq H$ if $u \in H \setminus \langle a^2 \rangle ^H$.
\end{itemize}
\end{lemma}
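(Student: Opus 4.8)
The plan is to deduce everything from the structure of the Stallings (folded core) graph of $H$.

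For (i), the map $\phi\colon G\to\Z/2\Z$ with $a\mapsto\bar1$, $b\mapsto\bar0$ has kernel of index $2$, and Reidemeister--Schreier with the Schreier transversal $\{1,a\}$ shows that $\ker\phi$ is freely generated by $\{a^2,b,aba^{-1}\}$; since $aba^{-1}=b^a$, this kernel is exactly $N$. Hence $N\lhd G$ has index $2$, $H=\langle a^2,b\rangle\leqslant N$ is free of rank $2$ on $\{a^2,b\}$, and $|N:H|=\infty$ because a finite-index subgroup of a rank-$3$ free group has rank $\geqslant3>2$ by the Nielsen--Schreier formula. I would then bring in the Stallings graph $\Gamma$ of $H$: two vertices $0,1$, two $a$-edges $0\to1$ and $1\to0$, a single $b$-loop at $0$, basepoint $0$. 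One checks at once that $\Gamma$ is folded and equals its own core, so that a reduced word $w$ lies in $H$ if and only if $w$ labels a closed path in $\Gamma$ based at $0$; and since the Stallings graph of $H^a$ is $\Gamma$ re-based at the vertex $1$ reached from $0$ along an $a$-edge, a reduced word $w$ lies in $H^a$ if and only if it labels a closed path based at $1$. Reading a reduced word in $\Gamma$ is deterministic and a partial bijection of $\{0,1\}$; the letters $a^{\pm1}$ act as the transposition of $\{0,1\}$, while $b^{\pm1}$ can only be read at, and fixes, the vertex $0$.

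Part (ii) follows quickly: obviously $\langle a^2\rangle\subseteq H\cap H^a$, while if $g\in H\cap H^a$ then $g$ labels closed paths in $\Gamma$ based both at $0$ and at $1$, so reading $g$ from these two starting points in parallel keeps the pair of current vertices, which is $(0,1)$ initially, inside $\{(0,1),(1,0)\}$ --- an $a^{\pm1}$-step sends $(i,j)$ to $(1-i,1-j)$, while a $b^{\pm1}$-step would need both coordinates to equal $0$. Hence $g$ uses only $a$-edges, and $g\in\langle a^2\rangle$. (Alternatively: in $N=F(a^2,b,b^a)$ the subgroups $H$ and $H^a=\langle a^2,b^a\rangle$ are ``coordinate'' free factors, and they meet in $\langle a^2\rangle$ by the retraction of $N$ killing $b^a$.)

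The crux is the following claim, from which (iii) and (iv) follow formally. \emph{Let $w=pcp^{-1}$ be a reduced word with $c\neq1$ cyclically reduced, and suppose $w^m\in H$ for some $m\in\N$. If $c\notin\langle a\rangle$, then $w\in H$. If $c=a^k$ with $k\neq0$, then $p\in H$, $km$ is even, and $w^m\in\langle a^2\rangle^H$.} To prove the claim, read the reduced word $w^m=p\,c^m\,p^{-1}$ as a closed path based at $0$; by determinism and injectivity of reading, $p$ carries $0$ to some vertex $v$ and $p^{-1}$ carries $v$ back to $0$, while $c^m$ labels a closed path based at $v$. If $c=a^k$, reducedness of $p\,a^k\,p^{-1}$ prevents the last letter of $p$ from lying in $\{a,a^{-1}\}$, so either $p=1$ or the last edge $p$ traverses is a $b$-edge; as $b$-edges meet only the vertex $0$, in both cases $v=0$, hence $p\in H$, $a^{km}=p^{-1}w^m p\in H$, so $km$ is even and $w^m=p\,(a^2)^{km/2}\,p^{-1}\in\langle a^2\rangle^H$. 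If instead $c\notin\langle a\rangle$, write $c=a^{j_0}b^{\pm1}\cdots$ with $j_0\in\Z$; since $b$-edges meet only $0$, a reading of $c$ can start only from the vertex $j_0\bmod2$, and because $c^m$ is read without getting stuck, ``read $c$'' must fix its initial vertex, so $c$ labels a closed path based at $v=j_0\bmod2$; concatenating, $w=pcp^{-1}$ labels a closed path based at $0$, and since $\Gamma$ is folded and $w$ is reduced, $w\in H$.

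Granting the claim: for (iii) with $l\neq0$, the hypothesis $a^l\in h^G$ forces $a^l$ to be the cyclic reduction of $h$ (conjugate elements of a free group share a cyclic reduction), so $h=p\,a^l\,p^{-1}$ is reduced, and the claim with $m=1$ yields $p\in H$, whence $a^l=p^{-1}hp\in h^H$ (the case $l=0$ is trivial). For (iv), the first assertion is the standard fact that centralizers in free groups are maximal cyclic, together with $a$ not being a proper power. For the second, take $u\in H\setminus\langle a^2\rangle^H$; then $u\neq1$ since $1\in\langle a^2\rangle^H$, so $\C_G(u)=\langle w\rangle$ with $u=w^m$ for some $m\in\N$, and the cyclic reduction $c$ of $w$ is nontrivial. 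Applying the claim to $w^m=u\in H$: if $c\in\langle a\rangle$ we would get $u\in\langle a^2\rangle^H$, contradicting the choice of $u$; so $c\notin\langle a\rangle$, hence $w\in H$ and $\C_G(u)=\langle w\rangle\subseteq H$. The one real obstacle is the claim, and within it the bookkeeping that pins down the vertex $v$; this is exactly where reducedness of the expression $w=pcp^{-1}$ and the special feature of $\Gamma$ that all its $b$-edges meet a single vertex get used, the rest being routine.
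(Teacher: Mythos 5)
Your proof is correct, and for the substantial parts (iii) and (iv) it takes a genuinely different route from the paper. Parts (i) and (ii) essentially coincide with the paper's treatment: the paper also obtains $N$ as the kernel of $G\to\Z/2\Z$ with free basis $\{a^2,b,b^a\}$, and its proof of (ii) is exactly your parenthetical ``coordinate free factors'' alternative. For (iii) and (iv), however, the paper never looks at the Stallings graph of $H$ inside $G$: it works with the index-two decomposition $G=N\sqcup Na$ and exploits that $H$ is a free factor, hence malnormal, in $N$. For (iii) it notes $l$ must be even (as $a\notin N$, $H\subset N$), writes the conjugator as $ta^k$ with $t\in N$, $k\in\{0,1\}$, and invokes malnormality of $H$ in $N$ to force $t\in H$; for (iv) it produces from a centralizing element outside $N$ a conjugacy in $N$ between $u\in H$ and $u^a\in H^a$, and then applies the conjugacy criterion for cyclically reduced words in the free group $N$ with basis $\{a^2,b,b^a\}$ to force $u\in\langle a^2\rangle^H$. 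Your approach instead stays in $G=F(a,b)$ and extracts everything from one structural claim about reduced expressions $w=pcp^{-1}$ with $w^m\in H$, proved by tracking readings in the two-vertex folded graph (the key observations --- that the reading maps are partial injections of the vertex set, that $b$-edges meet only the basepoint, and that reducedness of $pa^kp^{-1}$ forbids the last letter of $p$ from being $a^{\pm1}$ --- are exactly what is needed, and your verification of them is sound). What your route buys is uniformity: (iii) and (iv) become formal corollaries of a single lemma about cyclic reductions, with no appeal to malnormality of free factors or to the conjugacy criterion in $N$; what the paper's route buys is brevity and the fact that its intermediate facts (malnormality of $H$ in $N$, the coset decomposition) are reused in spirit later in the construction. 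One cosmetic caution: your justification that ``read $c$ fixes its initial vertex'' is phrased for $m\ge2$; for $m=1$ the closedness of $c$ at $v$ is immediate from $c^m=c$ being a closed path at $v$, so no gap, but it is worth saying explicitly.
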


\begin{proof} Claim (i) follows from the observation that $N$ is the kernel of the homomorphism from $G$ onto $\Z/2\Z$, sending $a$ to $\overline{1}$ and $b$ to $\overline{0}$,
which also implies that $\{a^2,b,b^a\}$ is a free generating set of $N$.
Therefore $H \cap H^a =\langle a^2 \rangle$, as $H^a=\langle a^2,b^a \rangle$, so (ii) holds.
%Since $\langle b^a \rangle \leqslant H^a$ and $\langle b^a \rangle \cap \langle a^2 \rangle=\{1\}$,
%the second part of claim (i) follows from the first one.

Now, suppose that $ h =xa^lx^{-1} \in H$ for some $x \in G$ and $l \in \Z$. Since $a \notin N$ and $H \subset N$ we see that $l$ must be even, so $a^l \in H$. As $|G:N|=2$, there exist
$k \in \{0,1\}$ and $t \in N$ such that $x=t a^k$, so $h=ta^l t^{-1}$ in $N$. Observe that $H$ is malnormal in $N$, being a free factor, hence either
$l=0$ and $h=1$ or $t \in H$. This proves claim (iii).

The fact that $\C_G(a^l)=\langle a \rangle$, for any $l \in \Z\setminus\{0\}$, is obvious. Now suppose that $u \in H$ is an element such that
$\C_G(u) \not\subseteq H$. Since $H$ is malnormal in $N$, there must exist $v \in \C_G(u)\setminus N$. Thus $v=fa$ for some $f \in N$, so
$u=u^v =fu^af^{-1}$, and $w=f^{-1}uf=u^a \in H^a$ is conjugate to $u \in H$ in $N$.

Recall that $N$ is freely generated by  $S=\{a^2,b,b^a\}$, and  $H=\langle a^2,b\rangle $,
$H^a= \langle a^2,b^a\rangle$ are freely generated by subsets of $S$.
Therefore we can write $u=hu_1h^{-1}$ and $w=h' w_1 h'^{-1}$, where $u_1,h \in H$, $w_1,h' \in H^a$, and the elements $u_1$, $w_1$ are cyclically reduced over $S$. The conjugacy criterion for cyclically reduced elements in the free group $N$ (see \cite[Proposition~I.2.14]{L-S})
implies that $u_1=a^{n}=w_1$, for some $n \in 2\N$.
Hence $u=h a^{n} h^{-1} \in \langle a^2 \rangle^H $, and (iv) holds.
\end{proof}

\subsection{Hyperbolic groups and quasiconvex subsets}
Let $G$ be a hyperbolic group in the sense of Gromov \cite{Gromov}. This means that $G$ is generated by a finite set $S$, and the Cayley graph $\Gamma(G,S)$ is $\delta$-hyperbolic, for some $\delta \ge 0$ (see \cite[Part III.H]{Bri-Hae} for a detailed exposition). A subset $Q$ of $G$ is said to be \emph{quasiconvex} if there exists $\e \ge 0$ such that any geodesic joining two elements of $Q$ in $\Gamma(G,S)$ is contained in the $\e$-neighborhood of $Q$.

%Quasiconvex subgroups of a hyperbolic group are finitely generated and quasi-isometrically embedded (\cite[Lemma III.$\Gamma$.3.5]{Bri-Hae}).
We outline basic properties of quasiconvex subsets in the following remark.

\begin{rem}\label{rem:props-qc} Let $G$ be a hyperbolic group and let $Q \subseteq G$. Then
\begin{enumerate}
  \item the quasiconvexity of $Q$ is independent of the choice of a finite generating set $S$ for $G$;
  \item any elementary subgroup of $G$ is quasiconvex;
  \item if $G$ is a free group of finite rank then any finitely generated subgroup of $G$ is quasiconvex;
  \item finite unions and products of quasiconvex subsets of $G$ are also quasiconvex;
  \item if $K$ is a quasiconvex subgroup of $G$ then $K$ is itself hyperbolic and any quasiconvex subset of $K$ is also quasiconvex in $G$.
\end{enumerate}
\end{rem}

Properties (1)--(3) are well-known (cf. \cite[Chapter 3]{Mih} and \cite{Short}); see \cite[Lemma~2.1 and Proposition~0.1]{Min-pap1} for property (4).
Finally, property (5) follows from \cite[3.3.8]{Mih} and \cite[Remark~6]{Min-G-sbgps}.

\begin{lemma}\label{lem:elem}
If $G$ is  a torsion-free hyperbolic group and $c \in G$ is a non-trivial element then
$\C_G(c)$ is a malnormal infinite cyclic subgroup of $G$, generated by some $d \in G$. In particular,
$\C_G(d^m)=\langle d \rangle$, for any $m \in \Z\setminus\{0\}$.
\end{lemma}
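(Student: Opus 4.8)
The plan is to invoke the standard structure theory of torsion-free hyperbolic groups. The key input is that in a hyperbolic group every infinite-order element $c$ is contained in a unique maximal elementary subgroup $E(c)$, which is precisely the set of elements $g$ such that $g c^n g^{-1} = c^{\pm m}$ for some nonzero integers $n,m$ (equivalently, the stabilizer of the pair of endpoints of the axis of $c$ on the Gromov boundary). First I would note that since $G$ is torsion-free, $E(c)$ is a torsion-free elementary group, hence infinite cyclic by the basic fact recalled at the end of the Notation subsection; write $E(c) = \langle d \rangle$ for a suitable $d$. Since $\C_G(c) \leqslant E(c)$ (any element commuting with $c$ certainly conjugates $c$ to a power of itself) and $E(c)$ is abelian, we in fact get $\C_G(c) = E(c) = \langle d \rangle$, which is infinite cyclic.

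Next I would establish malnormality of $\langle d \rangle$. Suppose $g \langle d \rangle g^{-1} \cap \langle d \rangle \neq \{1\}$ for some $g \in G$; then $g d^k g^{-1} = d^l$ for some nonzero $k,l$. This means $g$ conjugates $c$ (a power of $d$) in the sense defining $E(c)$, so $g \in E(c) = \langle d \rangle$. Hence $\langle d \rangle$ is malnormal in $G$. Finally, for the last assertion: if $m \in \Z \setminus \{0\}$, then $d^m$ is a non-trivial element of $G$, so by what we have just proved $\C_G(d^m)$ is infinite cyclic; but $\langle d \rangle \leqslant \C_G(d^m)$ and $\C_G(d^m) \leqslant E(d^m) = E(d) = \langle d \rangle$ (since $d$ and $d^m$ share an axis, hence the same maximal elementary subgroup), so $\C_G(d^m) = \langle d \rangle$.

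The main obstacle, and the only point requiring care, is citing correctly the existence and characterization of the maximal elementary subgroup $E(c)$ and the fact that the centralizer of a non-trivial element is elementary in a torsion-free hyperbolic group; this is classical (going back to Gromov, with detailed treatments in, e.g., the work of Ol'shanskii on hyperbolic groups, or \cite{Bri-Hae}), but one must make sure the torsion-freeness hypothesis is used to upgrade ``elementary'' to ``infinite cyclic'' and to rule out the dihedral-type behaviour where an element conjugates $c$ to $c^{-1}$ outside $\langle d\rangle$ — in the torsion-free case such an element would still lie in $E(c)=\langle d\rangle$, so no genuine difficulty arises, but it should be mentioned. Everything else is routine manipulation with the defining property of $E(c)$.
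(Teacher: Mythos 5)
Your argument is correct and follows essentially the same route as the paper, which simply cites Ol'shanskii's result on the maximal elementary subgroup $E(c)$ (\cite[Lemma~1.16]{Olsh}) together with the fact that torsion-free elementary groups are cyclic; your write-up just unpacks what that citation contains. The only point you gloss over is why $gd^kg^{-1}=d^l$ forces $l=\pm k$ (e.g.\ by invariance of translation length, or by noting $gE(d)g^{-1}=E(d^l)=E(d)$), but this is a routine detail within the same framework.
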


\begin{proof} This is an immediate consequence of \cite[Lemma~1.16]{Olsh} and the fact that torsion-free elementary groups are cyclic.
\end{proof}

\begin{lemma}\label{lem:comm->conj_into_centr} Let $G$ be a torsion-free hyperbolic group and let $x,y, z \in G$ be elements
satisfying $y^l=zx^kz^{-1}$, for some $k,l \in \mathbb{Z}\setminus\{0\}$ . Then $y \in \C_G(x)^z$.
\end{lemma}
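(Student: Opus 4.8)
The plan is to reduce the statement to the description of centralizers in torsion-free hyperbolic groups provided by Lemma~\ref{lem:elem}. First I would dispose of the degenerate case: if $x=1$, then $\C_G(x)=G$ and the conclusion $y\in \C_G(x)^z=G$ is automatic, so from now on assume $x\neq 1$. Since $G$ is torsion-free and $k\neq 0$, this also gives $x^k\neq 1$.

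Next, set $c=z^{-1}yz\in G$. Conjugating the hypothesis $y^l=zx^kz^{-1}$ by $z^{-1}$ yields $c^l=z^{-1}y^lz=x^k$. Thus $c$ commutes with $c^l=x^k$, so it suffices to show $c\in\C_G(x)$, because then $y=zcz^{-1}\in z\C_G(x)z^{-1}=\C_G(x)^z$, as required. To see this, apply Lemma~\ref{lem:elem} to the non-trivial element $x$: the centralizer $\C_G(x)$ is infinite cyclic, generated by some $d\in G$, and moreover $\C_G(d^m)=\langle d\rangle$ for every $m\in\Z\setminus\{0\}$. Since $x\in\C_G(x)=\langle d\rangle$ and $x\neq 1$, we may write $x=d^n$ with $n\neq 0$, hence $x^k=d^{nk}$ with $nk\neq 0$. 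Now $c$ commutes with $c^l=x^k=d^{nk}$, so $c\in\C_G(d^{nk})=\langle d\rangle=\C_G(x)$, which finishes the argument.

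The whole computation is essentially immediate once Lemma~\ref{lem:elem} is available; the only points requiring care are the bookkeeping of which element generates the relevant cyclic centralizer, and the mild use of torsion-freeness both to reduce to $x\neq 1$ and to guarantee $x^k\neq 1$. I therefore do not expect any genuine obstacle: this is a short warm-up lemma whose content is carried entirely by the cyclicity and malnormality of centralizers in torsion-free hyperbolic groups.
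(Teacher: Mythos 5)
Your proof is correct and takes essentially the same route as the paper, namely an immediate reduction to Lemma~\ref{lem:elem} after conjugating by $z$. The only cosmetic difference is that the paper passes from $y^l\in \C_G(x)^z$ to $y\in \C_G(x)^z$ via the malnormality of $\C_G(x)^z$ together with torsion-freeness, whereas you invoke the clause $\C_G(d^{m})=\langle d\rangle$ of the same lemma (and handle the trivial case $x=1$ explicitly); the two mechanisms are interchangeable here.
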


\begin{proof} By Lemma~\ref{lem:elem}, $\C_G(x)^z=\C_G(x^z)$ is malnormal in $G$, so, since $G$ is torsion-free, $y^l \in \C_G(x)^z$ implies that
$y \in \C_G(x)^z$.
\end{proof}

Following \cite{Min-G-sbgps} we say that a subset $Q$ of a group $G$ is \emph{small relative to a subgroup} $F \leqslant G$ if $F \not\subseteq P_1 Q^{-1}Q P_2$ for any finite subsets $P_1,P_2$ of $G$.

The next result was obtained by the author in \cite{Min-G-sbgps}, as part of his PhD thesis \cite{Min-thesis}, and generalized an earlier theorem of Ol'shanskii~\cite{Olsh}.

\begin{thm}\label{thm:pres_qc} Suppose that $L$ is a torsion-free hyperbolic group, $F \leqslant L$ is a non-elementary subgroup, $Q \subseteq L$ is a quasiconvex subset and
$U=\{u_1,\dots,u_n\}$ is any finite collection of elements of $L$. If $Q$ is small relative to $F$ in $L$ then there exist elements $w_1,\dots,w_n \in F$ satisfying the following. Let
$K \lhd L$ be the normal closure of the elements $u_iw_i$, $i=1,\dots,n$, let $G=L/K$ and let $\eta:L \to G$  be the natural homomorphism. Then
\begin{enumerate}[(a)]
  \item $G$ is a non-elementary torsion-free hyperbolic group;
  \item the restriction of $\eta$ to $Q$ is injective and $\eta(R)$ is quasiconvex in $G$ for every quasiconvex subset $R$ of $L$ with $R \subseteq Q$;
%  \item $\eta(u_i) \in \eta(F)$, for $i=1,\dots,n$;
  \item $\eta$ preserves centralizers on $Q$ (in the sense of Definition~\ref{df:pres_conj});
 \item $\eta$ preserves conjugacy on $Q$ (in the sense of Definition~\ref{df:pres_conj}).
 \end{enumerate}
\end{thm}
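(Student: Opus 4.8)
The plan is to obtain this via small cancellation theory over hyperbolic groups, in the form developed by Ol'shanskii in \cite{Olsh} and refined in \cite{Min-G-sbgps}; I will only indicate how the ingredients fit together. The idea is to choose the multipliers $w_i \in F$ so that the relators $r_i = u_i w_i$ form a $C'(\lambda)$-type small cancellation system over $L$ for a sufficiently small parameter $\lambda$, with the additional feature that each $r_i$ has uniformly bounded overlap with the quasiconvex set $Q$; then a Greendlinger-type analysis of van Kampen diagrams over the presentation $\langle L \mid r_1,\dots,r_n\rangle$ yields (a)--(d).

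The heart of the argument, and the step I expect to be the main obstacle, is the choice of the $w_i$. Fix a finite generating set $S$ for which $\Gamma(L,S)$ is $\delta$-hyperbolic and let $\e$ be a quasiconvexity constant for $Q$. Since $F$ is non-elementary and $L$ is torsion-free, $F$ contains a free subgroup of rank $2$ and hence loxodromic elements with pairwise non-commensurable axes, each of which generates its own centralizer by Lemma~\ref{lem:elem}. The hypothesis that $Q$ is \emph{small relative to $F$} is used precisely to rule out the one obstruction to the construction: if a subpath of a geodesic word $C$-fellow-travels a geodesic both of whose endpoints lie in $Q$, then that subpath, read as an element of $L$, lies in $B_C\,Q^{-1}Q\,B_C$ (with $B_C$ the ball of radius $C$ about $1$), a set of the form $P_1 Q^{-1}Q P_2$; so smallness guarantees that $F$ contains loxodromic elements, and words $w_i$ built from high powers of such elements separated by short ``index markers'' and conjugated by suitable elements of $F$, all of whose long subpaths avoid every such set. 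Carrying this out one arranges that, with $r_i = u_i w_i$: (i) each $|r_i|_S$ exceeds $\max_j |u_j|_S$ and all the constants below, and no $r_i$ is a proper power; (ii) any subpath shared, up to $\delta$-fellow travel, by two of the paths $r_i^{\pm1},r_j^{\pm1}$ or occurring in two places of a single $r_i$ has length at most $\lambda |r_i|_S$; and (iii) any subpath of an $r_i$ that $\e$-fellow-travels a geodesic with both endpoints in $Q$ has length at most $\lambda|r_i|_S$. The difficulty is realising (i)--(iii) simultaneously, and this is the only place where the hypotheses on $F$ and on $Q$ are used.

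With $\lambda$ chosen small enough, Ol'shanskii's small cancellation theorem over hyperbolic groups \cite{Olsh} gives (a): $G = L/K$ is hyperbolic; it is torsion-free because $L$ is and no $r_i$ is a proper power; and it remains non-elementary since only finitely many small-cancellation relators are imposed on a non-elementary group. For (b)--(d) one passes to a minimal van Kampen diagram $\Delta$ over $\langle L\mid r_1,\dots,r_n\rangle$ and invokes the Greendlinger-type lemma: if $\Delta$ has at least one face, then some face $\Pi$ of $\Delta$ has most of $\partial\Pi$ lying, up to bounded fellow travel, on $\partial\Delta$. If $q\in Q\setminus\{1\}$ and $\eta(q)=1$, take $\partial\Delta$ to read a geodesic for $q$; the large boundary arc of $\Pi$ then forces a long subpath of some $r_i^{\pm1}$ to fellow-travel the $Q$-geodesic $[1,q]$, contradicting (iii), so $\eta|_Q$ is injective. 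If $R\subseteq Q$ is quasiconvex, then on a $G$-geodesic joining two points of $\eta(R)$ one replaces any boundary arc coming from a relator by the complementary arc of that relator, which keeps the path within a bounded neighbourhood of $R$ and gives quasiconvexity of $\eta(R)$. For (c) and (d), take $x,y\in Q$ and $g\in G$ with $g\in\C_G(\eta(x))$ (resp.\ $\eta(y)=g\eta(x)g^{-1}$), lift $g$ to $\tilde g\in L$, and analyse the diagram witnessing $[\tilde g,x]\in K$ (resp.\ $y^{-1}\tilde gx\tilde g^{-1}\in K$): the short-piece conditions (ii), (iii) together with the quasiconvexity of $Q$ force $\Delta$ to have no faces, so the relation already holds in $L$, giving $\C_G(\eta(x))=\eta(\C_L(x))$ (resp.\ $y\in x^L$). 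I expect (c) and (d) to be the subtlest points of the deduction, as they require controlling the whole conjugation pattern around $Q$ in the quotient, not merely which elements of $Q$ are killed.
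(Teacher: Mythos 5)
Your overall strategy---relators $r_i=u_iw_i$ with $w_i\in F$ chosen to satisfy an Ol'shanskii-type small cancellation condition over $L$ together with a bounded-overlap condition relative to $Q$, followed by a van Kampen/Greendlinger analysis---is indeed the machinery that lies behind this theorem. But the paper does not re-prove it: its proof is a short reduction to \cite[Theorem~1]{Min-G-sbgps}, using \cite[Theorem~1]{Olsh} to note that any non-elementary subgroup of a torsion-free hyperbolic group is a $G$-subgroup, and reading off from the explicit relators in that earlier proof (equation (21) and Section~7 of \cite{Min-G-sbgps}) that the kernel may be taken to be the normal closure of elements $u_iw_i$ with $w_i\in F$. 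What you propose is to redo that long technical argument, and your sketch leaves exactly its hard core---the simultaneous realisation of your conditions (i)--(iii) using the smallness of $Q$ relative to $F$, and the derivation of (b)--(d) from them---as acknowledged ``obstacles''. As it stands this is an outline of the cited theorem, not a proof of the statement.

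Moreover, two of the deductions as you state them would fail. For (c) you take an \emph{arbitrary} lift $\tilde g$ of $g\in \C_G(\eta(x))$ and claim the diagram for $[\tilde g,x]$ has no faces, hence $[\tilde g,x]=1$ in $L$; this cannot be right, since replacing $\tilde g$ by $k\tilde g$ with $1\neq k\in K$ gives a lift whose commutator with $x$ is nontrivial in $L$, so its diagram certainly has faces. What must be shown is that \emph{some} lift of $g$ lies in $\C_L(x)$ (and, in (d), that \emph{some} element of $L$ conjugates $x$ to $y$), which requires a minimal-length choice of lift/conjugator or the quasigeodesic analysis carried out in \cite{Min-G-sbgps}, not a face-free claim for a fixed lift; your injectivity argument has a similar slip, since injectivity on the subset $Q$ concerns geodesics between two points of $Q$, not only elements killed by $\eta$. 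Finally, ``non-elementary since only finitely many small-cancellation relators are imposed'' and ``torsion-free because no $r_i$ is a proper power'' are restatements of conclusions of the machinery rather than reasons (finitely many relators can collapse any group). So either carry out the small cancellation construction in full---which amounts to reproving \cite[Theorem~1]{Min-G-sbgps}---or do what the paper does: cite that theorem, check its hypotheses via the $G$-subgroup remark, and verify only the asserted form of the kernel.
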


\begin{proof} The statement is essentially a special case of \cite[Theorem~1]{Min-G-sbgps} (indeed, any non-elemen\-tary subgroup of a torsion-free hyperbolic group is a
\emph{$G$-subgroup} by \cite[Theorem 1]{Olsh}).
The main difference is that here we do not require the homomorphism $\phi$ to be surjective on $F$, but we simply ask for
the image of $F$ to contain the images of the given elements $u_i$, $i=1,\dots,n$. This allows us to specify that $\ker \phi=K$ is the normal closure of elements of the form $u_iw_i$, for some
$w_i \in F$, $i=1,\dots,n$. The latter can be seen from the explicit form of the extra relators imposed on $L$ to obtain $G$ in the proof of \cite[Theorem~1]{Min-G-sbgps}: see equation (21) and Section~7 in \cite{Min-G-sbgps}.
\end{proof}

In the case when $Q$ is a finite union of cosets of quasiconvex subgroups the condition of being small relative to $F$ was characterized in \cite{Min-G-sbgps} as follows.

\begin{thm}[{\cite[Theorem~3]{Min-G-sbgps}}]\label{thm:small-crit} Let $L$ be a hyperbolic group and let $F \leqslant L$ be any subgroup. Suppose that $H_1,\dots,H_k $ are quasiconvex subgroups of $L$ such that
$|F:(F \cap gH_ig^{-1})|=\infty$ for all $g \in L$ and all $i=1,\dots,k$. Then  the quasiconvex subset $Q=\bigcup_{i=1}^k H_i \subseteq L$ is small relative to $F$ in $L$.
\end{thm}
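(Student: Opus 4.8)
The plan is to show that the failure of smallness forces one of the subgroups $H_i$ to have finite index in a conjugate-intersection with $F$, contradicting the hypothesis. Recall that $Q = \bigcup_{i=1}^k H_i$ is not small relative to $F$ precisely when there exist finite subsets $P_1, P_2 \subseteq L$ with $F \subseteq P_1 Q^{-1} Q P_2$. Since $Q^{-1}Q = \bigcup_{i,j} H_i^{-1}H_j = \bigcup_{i,j} H_i H_j$ and each $H_iH_j$ is a finite union of double cosets that is itself quasiconvex (by Remark~\ref{rem:props-qc}(4)), I would first reduce the inclusion $F \subseteq P_1 Q^{-1} Q P_2$ to a statement of the form $F \subseteq \bigcup_{s} p_s H_{i(s)} q_s$ for finitely many $s$, where the union is over finitely many translates of the individual subgroups $H_i$ (absorbing the finitely many double-coset representatives of $H_iH_j$ into the finite sets on either side). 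Thus $F$ is covered by finitely many double cosets $p_s H_{i(s)} q_s$.

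Next I would intersect with $F$: writing $F = \bigcup_s (F \cap p_s H_{i(s)} q_s)$, I want to conclude that for at least one index $s$ the set $F \cap p_s H_{i(s)} q_s$ is ``large'' in $F$. The natural way to make this precise is a pigeonhole/Baire-category-type argument: a group cannot be written as a finite union of cosets of subgroups each of infinite index (this is the classical Neumann lemma on covering a group by cosets). I would apply a left-translated version inside $F$: if $F = \bigcup_s f_s (F \cap g_s H_{i(s)} g_s^{-1}) \cdot (\text{stuff})$ — more carefully, one rewrites $F \cap p_s H_{i(s)} q_s$ as a coset (possibly empty) of the subgroup $F \cap p_s H_{i(s)} p_s^{-1}$, because $x \in p_s H_{i(s)} q_s$ with $x \in F$ means $x$ lies in a single left coset of $p_s H_{i(s)} p_s^{-1}$ intersected with $F$. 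Hence $F$ is a finite union of cosets of the subgroups $F \cap p_s H_{i(s)} p_s^{-1}$, $s$ ranging over our finite set. By B.H. Neumann's lemma, one of these subgroups must have finite index in $F$: there is an $s$ with $|F : F \cap p_s H_{i(s)} p_s^{-1}| < \infty$. Setting $g = p_s$ and $i = i(s)$, this directly contradicts the hypothesis that $|F : (F \cap g H_i g^{-1})| = \infty$ for all $g \in L$ and all $i$.

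The remaining bookkeeping is to make sure the reduction in the first step is valid, i.e., that $Q^{-1}Q$ really is contained in a finite union of cosets $p\, H_i\, q$ — this uses that $H_i H_j$, being the product of two quasiconvex subgroups of a hyperbolic group, is contained in a finite union $\bigcup_t H_i d_t$ of cosets of $H_i$ (equivalently $H_i \backslash H_i H_j / H_j$ is finite, or one can just use quasiconvexity together with the fact that $H_iH_j$ is itself quasiconvex and a union of $(H_i,H_j)$-double cosets of bounded diameter). I would cite \cite{Min-pap1} for the fact that the product of two quasiconvex subgroups is a finite union of cosets of one of them, or derive it from quasiconvexity of the product plus the standard fact that a quasiconvex subset is a finite union of cosets of the subgroups it ``grows along''. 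The main obstacle I anticipate is precisely this step: cleanly passing from the subset inclusion $F \subseteq P_1 Q^{-1} Q P_2$ to a covering of $F$ by finitely many cosets of the subgroups $F \cap g H_i g^{-1}$, keeping careful track of which factor gets conjugated where, so that the finite-index subgroup produced by Neumann's lemma is genuinely of the form $F \cap g H_i g^{-1}$. Once that is set up correctly, the contradiction with the hypothesis is immediate, and everything else is routine.
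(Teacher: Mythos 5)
The paper itself gives no proof of this statement: it is quoted verbatim from \cite[Theorem~3]{Min-G-sbgps}, where it is established by genuinely geometric arguments about quasiconvex subsets of hyperbolic groups. Your proposal breaks down at exactly the step you flag as the anticipated obstacle, and the break is not repairable bookkeeping. The reduction of $F \subseteq P_1 Q^{-1}QP_2$ to a covering of $F$ by finitely many double cosets $p_s H_{i(s)} q_s$ of the \emph{individual} subgroups rests on the claim that $H_iH_j$ is contained in a finite union of cosets $\bigcup_t H_i d_t$. That claim is false in general, even for quasiconvex subgroups of a free group: take $H_1=\langle a\rangle$, $H_2=\langle b\rangle$ in the free group on $\{a,b\}$; an inclusion $H_1H_2\subseteq \bigcup_{t=1}^T H_1 d_t$ would force every power $b^n$ into some $H_1d_t$, while each coset $H_1d_t$ contains at most one power of $b$ (if $b^n,b^m\in H_1d_t$ then $b^{n-m}\in H_1d_td_t^{-1}H_1=H_1$, so $n=m$). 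Equivalently, such a covering would give $|H_2:(H_2\cap H_1)|<\infty$ by Neumann's lemma, which is absurd. Your fallback justifications do not rescue this: $H_i\backslash H_iH_j/H_j$ is always a \emph{single} double coset, so its finiteness carries no information, and \cite{Min-pap1} proves quasiconvexity of products of quasiconvex subgroups, not that such a product is a finite union of cosets of one factor (no such statement holds). Note also that the Neumann trick has no purchase on a genuine two-factor piece $pH_iH_jq$: for $x,y\in F\cap pH_iH_jq$ one only gets $xy^{-1}\in pH_iH_jH_ip^{-1}$, which is not a coset of a subgroup, so the pigeonhole argument cannot even be started there.

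This missing step is the mathematical heart of the theorem, not a routine reduction. A purely coset-combinatorial argument cannot succeed, because the combinatorial skeleton of your argument (infinite index of all the intersections $F\cap gH_ig^{-1}$, plus $F$ contained in finitely many translates of products $H_iH_j$) does not imply smallness in general groups: in $\Z\times\Z$ the two factors and all their conjugates have infinite index, yet their product is the whole group, so $Q^{-1}Q\supseteq H_1H_2$ already contains any $F$. What hyperbolicity of $L$ and quasiconvexity of the $H_i$ buy --- and what the proof in \cite{Min-G-sbgps}, building on the analysis of products of quasiconvex subgroups in \cite{Min-pap1,Min-pap2}, actually establishes via thin-polygon/quasiconvexity arguments --- is precisely that a subgroup $F$ contained in finitely many sets of the form $P_1H_iH_jP_2$ must satisfy $|F:(F\cap gH_ig^{-1})|<\infty$ for some $g\in L$ and some $i$. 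The parts of your outline that are correct (that $F\cap pH_iq$ lies in a single left coset of $F\cap pH_ip^{-1}$, and B.~H.~Neumann's covering lemma) are the easy part; the covering of $F$ by such double cosets is exactly what remains unproved, so the proposal has a genuine gap and does not recover the cited theorem.
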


The next lemma shows that the map $\eta$ from Theorem~\ref{thm:pres_qc} preserves malnormality of any subgroup contained in $Q$.

\begin{lemma}\label{lem:malnorm_pres} Suppose that $\eta: L \to G$ is a homomorphism between groups $L$ and $G$, and  $H$ is a malnormal subgroup of $L$.
If $\eta$ preserves conjugacy and centralizers on $H$ then $\eta(H)$ is malnormal in $G$.
\end{lemma}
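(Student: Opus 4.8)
The plan is to unravel the definition of malnormality of $\eta(H)$ in $G$ and reduce each obstruction back to a statement about $H$ in $L$ that we can kill using the hypotheses that $\eta$ preserves conjugacy and centralizers on $H$. So suppose $g \in G$ is such that $\eta(H)^g \cap \eta(H) \neq \{1\}$; I want to show $g \in \eta(H)$. Pick a non-trivial element $t$ in the intersection: there are $h_1, h_2 \in H$ with $\eta(h_1) \neq 1$, $\eta(h_2) \neq 1$, and $\eta(h_1)^g = \eta(h_2)$. Also choose a preimage $\ell \in L$ of $g$ under $\eta$ (note $\eta$ is not assumed surjective, so first I should remark that since $g \in G$ and $\eta(H) \subseteq G$, the element $g$ conjugates one element of the image into another, but $g$ itself need not be in the image of $\eta$ — so I should be a little careful; in the application $\eta$ will be surjective, but let me handle it cleanly). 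Actually the cleanest route: since $\eta(h_2) = \eta(h_1)^g$, the elements $\eta(h_1)$ and $\eta(h_2)$ are conjugate in $G$; since $\eta$ preserves conjugacy on $H$, we get $h_2 \in h_1^L$, say $h_2 = h_1^{\ell}$ for some $\ell \in L$.

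Next I would compare $\ell$ with $g$. We have $\eta(h_1)^{\eta(\ell)} = \eta(h_2) = \eta(h_1)^g$, so $g\,\eta(\ell)^{-1} \in \C_G(\eta(h_1))$. This is where preservation of centralizers enters: $\C_G(\eta(h_1)) = \eta(\C_L(h_1))$, so $g\,\eta(\ell)^{-1} = \eta(c)$ for some $c \in \C_L(h_1)$, whence $g = \eta(c\ell)$, with $c\ell \in L$. Now set $\ell' = c\ell$; then $h_1^{\ell'} = h_1^{\ell} = h_2$ since $c$ centralizes $h_1$, and $g = \eta(\ell')$. Thus $g$ is in the image of $\eta$ after all (under our intersection hypothesis), and moreover $h_1^{\ell'} = h_2$ with both in $H$ and $h_1 \neq 1$ in $L$ (since $\eta(h_1) \neq 1$). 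By malnormality of $H$ in $L$ applied to the non-trivial element $h_1$ with $\ell'$ conjugating it into $H$: either $\ell' \in H$, or else $H^{\ell'} \cap H = \{1\}$ which would contradict $h_2 \in H^{\ell'} \cap H$ being non-trivial. Hence $\ell' \in H$, and therefore $g = \eta(\ell') \in \eta(H)$, as required.

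The only subtlety — and the step I expect to need the most care — is the very first reduction, because $H$ being malnormal in $L$ is the statement ``$h^{\ell} \in H$ with $h \neq 1$ forces $\ell \in H$'', and I am invoking it with a specific conjugator $\ell'$ that I built out of a preimage $\ell$ of $g$ together with a centralizing correction $c$; I must make sure $h_1$ is genuinely non-trivial in $L$ (guaranteed by $\eta(h_1) \neq 1$) and that $h_2 = h_1^{\ell'}$ genuinely lies in $H$ (true by construction since $h_2 \in H$). A minor point worth a sentence: I should note that the definition of malnormality, as stated in the paper, is ``$gHg^{-1} \cap H = \{1\}$ for all $g \in G \setminus H$'', which is equivalent to the implication I used; and that the argument never needs $\eta$ surjective — it only produces, for free, the conclusion that any $g$ witnessing a non-trivial overlap must itself lie in $\eta(H)$. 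This completes the proof, so I would then close with \qed.
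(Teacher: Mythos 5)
Your proof is correct and follows essentially the same route as the paper's: conjugacy preservation lifts the conjugation to $L$, centralizer preservation corrects the conjugator so that $g$ lies in its image, and malnormality of $H$ in $L$ finishes (the paper merely splits that last step into two uses of malnormality, first forcing the lifted conjugator into $H$ and then using $\C_L(h_1)\subseteq H$, while you fold both into the single corrected conjugator $\ell'$; your observation that surjectivity of $\eta$ is never needed is also accurate). One notational caveat: your formulas $g\,\eta(\ell)^{-1}\in \C_G(\eta(h_1))$ and $h_1^{c\ell}=h_1^{\ell}$ presuppose the convention $x^y=y^{-1}xy$, whereas the paper defines $x^y=yxy^{-1}$; with the paper's convention you should instead write $\eta(\ell)^{-1}g\in\C_G(\eta(h_1))$ and set $\ell'=\ell c$, after which the argument goes through verbatim.
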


\begin{proof} Assume that $d=c^g$, for some $c,d \in \eta(H)\setminus\{1\}$ and some $g \in G$. Choose $a,b \in H\setminus\{1\}$ such that
$\eta(a)=c$ and $\eta(b)=d$. Since $\eta$ preserves conjugacy on $H$, $b=a^h$, for some $h \in L$, which implies that $h \in H$ as $H$ is malnormal in $L$.

Now, $c^g=d=c^{\eta(h)}$ in $G$, so $\eta(h)^{-1}g \in \C_G(c)=\eta(\C_L(a))$, since $\eta$ preserves centralizers on $H$. Moreover,
$\C_L(a) \subseteq H$, as $H$ is malnormal in $L$, so $g \in \eta(h)\eta(\C_L(a)) \subseteq \eta(H)$. Therefore $\eta(H)$ is malnormal in $G$.
\end{proof}

The following statement is essentially a corollary of one of the author's results from \cite{Min-pap2}.
\begin{lemma}\label{lem:cyc_intersec_preserved} Let $H_1$, $H_2$ be  subgroups of a hyperbolic group $L$, with $H_1$ quasiconvex,
 let $G$ be any group and let $\eta:L \to G$ be a homomorphism. Suppose that the following two conditions are satisfied:
\begin{itemize}
  \item[(i)] $\eta$ preserves conjugacy on $H_1 \cup H_2$;
  \item[(ii)] the intersection $H_1^g \cap H_2$ is elementary, for every $g \in L$.
\end{itemize}
Then for all $h \in G$,  the intersection $\eta(H_1)^{h} \cap \eta(H_2)$ is elementary in $G$.
\end{lemma}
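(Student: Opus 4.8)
The plan is to fix $h \in G$, pull the subgroup $D \coloneq \eta(H_1)^h \cap \eta(H_2)$ of $G$ back to a subgroup $P \leqslant H_2$ with $\eta(P)=D$, use hypothesis (i) to show that $P$ is pointwise conjugate into $H_1$ in $L$, invoke (together with hypothesis (ii)) the result of \cite{Min-pap2} of which this lemma is stated to be a corollary to conclude that $P$ is elementary, and finally note that $D=\eta(P)$, being a homomorphic image of $P$, must then be elementary as well.

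First I would set $P=\{y \in H_2 \mid \eta(y) \in D\}=\eta^{-1}(D)\cap H_2$. Since $D$ is the intersection of the two subgroups $\eta(H_1)^h$ and $\eta(H_2)$ of $G$ it is itself a subgroup, hence so is $P$; moreover $\eta(P)=D$, because $D \subseteq \eta(H_2)$. Now take an arbitrary $y \in P$. As $\eta(y)\in D \subseteq \eta(H_1)^h$, there is $x \in H_1$ with $\eta(y)=h\,\eta(x)\,h^{-1}$, so $\eta(y)\in \eta(x)^G$. Since $x$ and $y$ both lie in $H_1\cup H_2$, hypothesis (i) (that $\eta$ preserves conjugacy on $H_1\cup H_2$) yields $y \in x^L$, so $y$ is conjugate in $L$ to an element of $H_1$. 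Therefore $P \subseteq \bigcup_{g\in L}H_1^g$, i.e.\ $P$ is pointwise conjugate into the quasiconvex subgroup $H_1$.

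Next, for every $g \in L$ we have $P \cap H_1^g \leqslant H_2 \cap H_1^g$, which is elementary by hypothesis (ii); hence all of the intersections $P\cap H_1^g$ are elementary. At this point the essential input is the result of \cite{Min-pap2}: if $H_1$ is a quasiconvex subgroup of a hyperbolic group $L$ and $P \leqslant L$ is pointwise conjugate into $H_1$ while $P \cap H_1^g$ is elementary for every $g \in L$, then $P$ is elementary. (If that statement is only stated for finitely generated, or quasiconvex, $P$, one first replaces $P$ by a rank-$2$ free subgroup generated by high powers of two independent loxodromic elements of $P$, which is quasiconvex in $L$ and inherits both hypotheses; this loses nothing, since we only need to rule out $P$ being non-elementary.) Applying it shows that $P$ is an elementary subgroup of $L$.

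Finally, $\eta(H_1)^h \cap \eta(H_2)=D=\eta(P)$ is a homomorphic image of the elementary group $P$, and a quotient of a virtually cyclic group is virtually cyclic, so $D$ is elementary in $G$. As $h \in G$ was arbitrary, this is the claim. Essentially all of the content sits in the cited theorem of \cite{Min-pap2}; the rest is the routine transfer of its hypotheses along $\eta$, the one mildly delicate point being that hypothesis (i) lets us pass from $\eta(y)\in\eta(x)^G$ to $y\in x^L$ even though the conjugating element $h\in G$ need not lie in the image of $\eta$.
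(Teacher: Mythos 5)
Your proposal is correct and follows essentially the same route as the paper: pull $\eta(H_1)^h\cap\eta(H_2)$ back to a subgroup of $H_2$, use (i) to get pointwise conjugacy into the quasiconvex subgroup $H_1$, invoke \cite[Proposition~1]{Min-pap2} (whose actual form gives $|P:(P\cap H_1^g)|<\infty$ for some $g\in L$, which together with (ii) forces $P$ to be elementary), and push forward along $\eta$. The parenthetical fallback about replacing $P$ by a rank-$2$ free quasiconvex subgroup is unnecessary, since the cited proposition applies to arbitrary subgroups, but it is harmless.
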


\begin{proof} Choose any $h \in G$ and denote $A=\eta(H_1)^{h} \cap \eta(H_2) \leqslant G$. Thus $A \leqslant \eta(H_2)$ is conjugate into $\eta(H_1)$ in $G$.
Let $B \leqslant H_2$ be a preimage of $A$ under $\eta$. Then $B$ is pointwise conjugate into $H_1$ by (i), and, as $L$ is hyperbolic and
$H_1$ is quasiconvex, we can apply \cite[Proposition 1]{Min-pap2} to conclude that $|B:(B \cap H_1^g)|<\infty$ for some $g \in L$. But
$B \cap H_1^g \subseteq H_2 \cap H_1^g$ is elementary by condition (ii), hence $B$ is elementary, and so is $A=\eta(B)$.
\end{proof}

\subsection{HNN-extensions}
We will need the following statements about HNN-extensions.

\begin{lemma} \label{lem:HNN-hyp} Let $G$ be a hyperbolic group, let $X=\langle x \rangle$, $Y=\langle y \rangle$ be infinite cyclic subgroups of $G$.
Suppose that either $X$ or $Y$ is malnormal in $G$ and $x \stackrel{G}{\not\approx} y$. Then the HNN-extension
\begin{equation*}%\label{eq:HNN}
L=G*_{X^t=Y}=\langle G,t \,\|\, txt^{-1}=y\rangle
\end{equation*}
is hyperbolic, and for any quasiconvex subgroup $H \leqslant G$, $H$ is quasiconvex in $L$.
\end{lemma}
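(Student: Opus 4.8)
The plan is to realize the HNN-extension $L$ as an acylindrical (in fact, the relevant small cancellation) amalgam and invoke the combination theorem of Bestvina--Feighn, but a cleaner route — given the hypotheses — is to use the fact that $L$ is itself hyperbolic by a known criterion for HNN-extensions of hyperbolic groups over cyclic malnormal subgroups, and then check quasiconvexity by a normal-form argument. Concretely, first I would observe that the associated subgroups $X=\langle x\rangle$ and $Y=\langle y\rangle$ are quasiconvex in $G$ (being elementary, by Remark~\ref{rem:props-qc}(2)), and that the hypothesis ``$X$ or $Y$ is malnormal and $x\stackrel{G}{\not\approx}y$'' is exactly the condition that guarantees no unexpected conjugations among the stable letter's action: it rules out Baumslag--Solitar-type pathologies. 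Under these conditions a theorem on HNN-extensions along quasiconvex malnormal (more precisely, here, malnormal cyclic) subgroups of a hyperbolic group — this is classical, going back to Bestvina--Feighn's combination theorem, with the hyperbolicity and the quasiconvexity of the vertex group both being part of the conclusion — yields that $L$ is hyperbolic and that $G$ is quasiconvex in $L$.

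Next I would reduce the quasiconvexity of an arbitrary quasiconvex subgroup $H\leqslant G$ to the quasiconvexity of $G$ in $L$. Since $G$ is quasiconvex in $L$, by Remark~\ref{rem:props-qc}(5) the subgroup $G$ is hyperbolic (which we already know) and, crucially, \emph{any quasiconvex subset of $G$ is quasiconvex in $L$}. Thus $H$, being quasiconvex in $G$ by hypothesis, is quasiconvex in $L$. This is the punchline and it is short once the quasiconvexity of $G$ in $L$ is established; so the real content is packaged into that one assertion.

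The main obstacle is therefore establishing that $L$ is hyperbolic with $G$ quasiconvex. For this I would appeal to the Bestvina--Feighn combination theorem (or Kapovich's or Mj--Reeves' refinements): the HNN-extension of a hyperbolic group $G$ along an isomorphism between quasiconvex subgroups is hyperbolic provided the ``annuli flare'' condition holds, and it is a standard fact that when the associated subgroups are cyclic and at least one is malnormal with $x\stackrel{G}{\not\approx}y$, the flaring condition is satisfied — indeed in this situation the edge group $\langle x\rangle\cong\langle y\rangle$ has the property that for each $g\in G$ the intersection $\langle x\rangle^g\cap\langle y\rangle$ is trivial (this is where malnormality of one factor together with non-commensurability of $x$ and $y$ enters), so the Bass--Serre tree action is acylindrical with bounded overlap along edges, forcing exponential flaring. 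The quasiconvexity of the vertex group $G$ in $L$ then comes for free from the combination theorem's output, or alternatively from the explicit quasi-geodesic normal forms (alternating syllables from $G$, $t$, $t^{-1}$) that the combination theorem provides, together with the BMS-type estimate that these normal forms are uniform quasigeodesics in $L$. I would present this as: ``By \cite[Bestvina--Feighn combination theorem]{...}, using that $\langle x\rangle^g\cap\langle y\rangle=\{1\}$ for all $g\in G$ — which follows from the malnormality of $X$ or $Y$ and the assumption $x\stackrel{G}{\not\approx}y$ — the group $L$ is hyperbolic and $G$ is quasiconvex in $L$. The final claim now follows from Remark~\ref{rem:props-qc}(5).'' If one wishes to stay strictly within the machinery already cited in the paper, the alternative is to note that $L$ is hyperbolic relative to $G$ trivially, combine with \cite{Osin}-type peripheral structure arguments, or simply cite a paper such as Kharlampovich--Myasnikov--Weil or Mihalik--Towle; I expect the intended citation is to the literature on Dehn fillings / combination theorems rather than a self-contained proof.
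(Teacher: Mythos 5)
Your argument is essentially the paper's proof: hyperbolicity of $L$ from Bestvina--Feighn's criterion for HNN-extensions over cyclic subgroups \cite[Corollary~2.3]{B-F-corr}, quasiconvexity of the vertex group $G$ in $L$, and then Remark~\ref{rem:props-qc}(5) to conclude that the quasiconvex subgroup $H \leqslant G$ is quasiconvex in $L$. The only point to tighten is that quasiconvexity of $G$ in $L$ is not literally ``for free'' from the combination theorem's conclusion; the paper supplies it by citing Kharlampovich--Myasnikov \cite[Theorem~4]{K-M}, using that the cyclic associated subgroups are quasiconvex in $G$ --- which is exactly the kind of citation your final sentence gestures toward.
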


\begin{proof} The hyperbolicity of $L$ under the above assumptions was first proved by Bestvina and Feighn \cite[Corollary~2.3]{B-F-corr}. Since cyclic subgroups
in hyperbolic groups are always quasiconvex (see Remark~\ref{rem:props-qc}), $G$ will be quasiconvex in $L$ by a result of
Kharlampovich and Myasnikov \cite[Theorem~4]{K-M}. Claim (5) of Remark~\ref{rem:props-qc} now shows that $H$ is quasiconvex in $L$.
\end{proof}

\begin{lemma} \label{lem:HNN-conj} Suppose that $G$ is a group and $X,Y\leqslant G$ are isomorphic subgroups, with an isomorphism $\tau:X \to Y$,
and $C \leqslant G$ is any subgroup. Let $L$ be the HNN-extension
\begin{equation*}%\label{eq:HNN}
L=G*_{X^t=Y}=\langle G,t \,\|\, txt^{-1}=\tau(x),~x \in X\rangle.
\end{equation*}
\begin{itemize}
  \item[(a)] If $C$ and $Y$ are malnormal in $G$ and $C \cap Y^G=\{1\}$ then $C$ is malnormal in $L$.
  \item[(b)] If $u,v \in G$ are elements such that $v \notin u^G$ and $u \notin X^G \cup Y^G$ then $v \notin u^L$ in $L$.
\end{itemize}
\end{lemma}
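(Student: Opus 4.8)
The plan is to apply Britton's Lemma throughout: every element of the HNN-extension $L=G*_{X^t=Y}$ has a normal form as an alternating word in elements of $G$ and powers of $t$, with no "pinches" $t g t^{-1}$ with $g \in Y$ or $t^{-1} g t$ with $g \in X$, and this normal form is unique up to the usual amalgamation moves.

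For part (a): suppose $c^g \in C$ for some $c \in C\setminus\{1\}$ and $g \in L$; I want to show $g \in C$. Write $g$ in normal form $g = g_0 t^{\e_1} g_1 \cdots t^{\e_k} g_k$ with $k \ge 0$. If $k=0$ then $g \in G$, and malnormality of $C$ in $G$ forces $g \in C$, so assume $k \ge 1$. Then $g c g^{-1}$ is a word of the form $g_0 t^{\e_1} g_1 \cdots t^{\e_k} g_k c g_k^{-1} t^{-\e_k}\cdots g_0^{-1}$. For this element to lie in $G$ (indeed in $C \le G$), Britton's Lemma says some subword $t^{\e_k} (g_k c g_k^{-1}) t^{-\e_k}$ must be pinchable, i.e. $g_k c g_k^{-1} \in Y$ (if $\e_k = 1$) or $\in X$ (if $\e_k = -1$). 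But $c \in C$ and $C$ is conjugate-disjoint from $Y^G$ (hence also from $X^G$, since $X = \tau^{-1}(Y)$ gives $X^G \subseteq $ the same conjugacy-closed set... more carefully: if $g_k c g_k^{-1} \in X$ then $\tau(g_k c g_k^{-1}) = t(g_k c g_k^{-1})t^{-1} \in Y$, while $g_k c g_k^{-1} \in C^{g_k^{-1}} \subseteq C^G$, contradicting $C \cap Y^G = \{1\}$ unless $c=1$). Since $c \ne 1$, we get a contradiction, so $k=0$ and $g \in C$. The main obstacle here is phrasing the pinch analysis cleanly so that the hypothesis $C \cap Y^G = \{1\}$ (together with $X^G = \tau^{-1}(Y)^G$) rules out both pinch cases; this is routine Britton's-Lemma bookkeeping.

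For part (b): suppose, for contradiction, that $v = u^g$ in $L$ for some $g \in L$, with $u,v \in G$. Again write $g = g_0 t^{\e_1} g_1 \cdots t^{\e_k} g_k$ in normal form. If $k = 0$, then $v = u^{g_0}$ in $G$, contradicting $v \notin u^G$. If $k \ge 1$, then $g u g^{-1}$ is a cyclically reduced-looking word whose innermost part is $t^{\e_k}(g_k u g_k^{-1})t^{-\e_k}$; for the whole element to collapse into $G$ (since $v \in G$), Britton's Lemma forces $g_k u g_k^{-1} \in Y$ (if $\e_k=1$) or $g_k u g_k^{-1} \in X$ (if $\e_k = -1$), i.e. $u \in Y^G \cup X^G$, contradicting the hypothesis $u \notin X^G \cup Y^G$. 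Hence no such $g$ exists and $v \notin u^L$. The key step — and the only place any real care is needed — is the induction on syllable length $k$: after one pinch, the element $g_{k-1}t^{\e_{k-1}}\cdots$ conjugating a new element of $G$ into $G$ has shorter normal form, so one concludes by induction, but in fact for part (b) a single application of Britton's Lemma at the innermost syllable already yields the contradiction, so no genuine induction is required. I expect both parts to be short once the normal-form framework is set up; the only subtlety is handling the symmetry between $X$ and $Y$ via the isomorphism $\tau$, ensuring that a pinch of either sign leads to the stated hypotheses.
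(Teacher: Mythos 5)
Your part (b) is essentially correct, and since the paper itself proves this lemma only by citing \cite{Min-G-sbgps} (Lemma 10.1) for (a) and \cite{M-Z} (Lemma 3.4) for (b), a self-contained Britton's-lemma argument is a legitimately different route. One small point of hygiene: with the relation $txt^{-1}=\tau(x)$, $x\in X$, the pinches are $tgt^{-1}$ with $g\in X$ and $t^{-1}gt$ with $g\in Y$, i.e.\ the opposite of the convention you state; this does not harm (b), because the hypothesis excludes both $X^G$ and $Y^G$, and a single application of Britton's lemma at the central syllable indeed suffices there.

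Part (a), however, has a genuine gap. You claim that the central pinch with $g_kcg_k^{-1}\in X$ already contradicts $C\cap Y^G=\{1\}$, on the grounds that $t(g_kcg_k^{-1})t^{-1}=\tau(g_kcg_k^{-1})\in Y$. But that conjugation is by $t\in L$, not by an element of $G$, and the hypothesis only forbids nontrivial elements of $C$ from being \emph{$G$-conjugate} into $Y$; nothing forbids them from being $G$-conjugate into $X$. The case is not vacuous: in the paper's application (proof of Theorem~\ref{thm:fin_ind}), part (a) is invoked with $C=\langle a_n\rangle=X$, so elements of $C$ lie in $X$ itself and this pinch genuinely occurs. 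A telltale sign is that your argument never uses the malnormality of $Y$, which is a hypothesis of (a). The correct bookkeeping is an induction on the $t$-length of $g$: if the first pinch forces the central element into $Y$, then $C\cap Y^G=\{1\}$ gives the contradiction, as you say; but if it goes into $X$, replace it by its $\tau$-image $y\in Y\setminus\{1\}$ and continue. At a subsequent syllable of sign $-1$ a pinch would require $g_iyg_i^{-1}\in Y$, which by malnormality of $Y$ forces $g_i\in Y$ and hence a pinch $t^{-1}g_it$ inside the reduced form of $g$ --- a contradiction; if instead every remaining pinch lands in $X$, the reduction terminates with $g_0y'g_0^{-1}\in C$ for some $y'\in Y\setminus\{1\}$, and only at that point does $C\cap Y^G=\{1\}$ deliver the contradiction. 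With this induction supplied (and then malnormality of $C$ in $G$ for the case $k=0$), part (a) is correct; as written, your one-step argument is not.
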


\begin{proof} See \cite[Lemma~10.1]{Min-G-sbgps} for (a) and \cite[Lemma~3.4]{M-Z} for (b).
\end{proof}

\section{Invariable generation may not pass to finite index subgroups}

\begin{proof}[Proof of Theorem~\ref{thm:fin_ind}] The desired group $G$ will be constructed as a direct limit of torsion-free hyperbolic groups. Let $G_0$ be a free group freely generated by two elements $a_0,b_0 \in G_0$. Let $N_0=\langle a_0^2,b_0,a_0b_0a_0^{-1}\rangle$, so that $|G_0:N_0|=2$,
and let $H_0=\langle a_0^2,b_0\rangle$ and $Q_0=H_0 \cup H_0^{a_0}$. Then $\{a_0^2,b_0\}$ is a free generating set for $H_0$, and $H_0$, $Q_0$ are quasiconvex in $G_0$ by part (3) of Remark~\ref{rem:props-qc}. Let $\{f_0=1,f_1,f_2,\dots,\}$ be an enumeration of all elements of $G_0$.

For each $i \in \N$ we will construct a group $G_i$ and an epimorphism $\phi_i:G_{i-1} \to G_i$, so that for
$\psi_i=\phi_i \circ \dots \circ \phi_1 : G_0 \to G_i$ ,  $a_i=\psi_i(a_0) \in G_i$,
$b_i=\psi_i(b_0) \in G_i$, $g_i=\psi_i(f_i) \in G_i$, $N_i=\psi_i(N_0)=\langle a_i^2,b_i,a_ib_ia_i^{-1}\rangle \lhd G_i$, $H_i=\psi_i(H_0)=\langle a_i^2,b_i \rangle \leqslant G_i$ and $Q_i=\psi_i(Q_0)=H_i \cup H_i^{a_i}$ the following conditions hold:
%
%
%
%Assume now that for some $n \ge 0$ we have already constructed groups $G_0, \dots, G_{n}$ and epimorphisms $\phi_i:G_{i-1} \to G_i$, $i=1,\dots,n$, such
%that the following holds. Let $\psi_i : G_0 \to G_i$ be the epimorphism defined by $\psi_i=\phi_i \circ \dots \circ \phi_1$, let $a_i=\psi_i(a_0) \in G_i$,
%$b_i=\psi_i(b_0) \in G_i$, $g_i=\psi_i(f_i) \in G_i$, $N_i=\psi_i(N_0)=\langle a_i^2,b_i,a_ib_ia_i^{-1}\rangle \lhd G_i$, $H_i=\psi_i(H_0)=\langle a_i^2,b_i \rangle \leqslant G_i$ and $Q_i=\psi_i(Q_0)=H_i \cup H_i^{a_i}$. Then
% for all  $i=0,1,\dots,n$ we have
\begin{flalign}
&G_i \text{ is a torsion-free hyperbolic group generated by }a_i,b_i;\label{eq:1}&&\\
&|G_i:N_i|=2, \text{ so that } \langle a_i\rangle \cap N_i=\langle a_i^2 \rangle;\label{eq:2}&&\\
&\psi_i \text{ is injective on } Q_0, ~H_i\text{ and } Q_i \text{ are quasiconvex in } G_i; \label{eq:3}&&\\
& \C_{G_i}(a_i^l)= \langle a_i \rangle, \text{ for all } l \in \Z\setminus\{0\},
\text{ and } \C_{G_i}(u) \subseteq H_i \text{ if } u \in H_i \setminus\langle a_i^2 \rangle ^{H_i}; \label{eq:4}&&\\
& \text{if } a_i^l \in h^{G_i}, \text{ for some } l \in 2\N \text{ and }  h \in H_i, \text{ then } a_i^l\in h^{H_i}; \label{eq:5}&&\\
& g_i\in H_i^{N_i} \cup \langle a_i \rangle^{N_i}; \label{eq:6}&&\\
& G_i=\langle a_i,b_i^{g_i} \rangle. \label{eq:7}&&
\end{flalign}

Setting $\psi_0:G_0 \to G_0$ to be the identity map, we see that for $i=0$ conditions \eqref{eq:1}--\eqref{eq:7} are all satisfied (either by construction or by Lemma~\ref{lem:H-props}). We now
proceed by induction, and thus assume that for some $n \in \mathbb{N}\cup \{0\}$, groups $G_0, \dots,G_n$, enjoying properties \eqref{eq:1}--\eqref{eq:7}, have already been constructed.
We are going to construct a group $G_{n+1}$ and an epimorphism $\phi_{n+1}:G_n \to G_{n+1}$ so that \eqref{eq:1}--\eqref{eq:7} hold for $i=n+1$.

Denote $c=\psi_n(f_{n+1}) \in G_n$. We shall first define an intermediate torsion-free hyperbolic group $L_n$, containing $G_n$, and an
index two normal subgroup
$M_n \lhd L_n$ such that $M_n \cap G_n =N_n$ and $c \in (H_n \cup \langle a_n \rangle)^t$, for some $t \in M_n$. If $c=1$ then set $L_n=G_n$, $M_n=N_n$ and $t=1$. So, assume that $c \neq 1$ and  let
$d \in G_n$ be an infinite order element such that $\C_{G_n}(c)=\langle d \rangle$ (cf. Lemma~\ref{lem:elem}).

If $d \in H_n^{N_n}$ in $G_n$ then set $L_n=G_n$, $M_n=N_n$ and let $t \in M_n$ be any element such that $d,c \in H_n^t$.
Similarly, if $d \stackrel{G_n}{\approx} a_n$, then $d,c \in \C_{G_n}(a_n)^z$, for some $z \in G_n$, by Lemma~\ref{lem:comm->conj_into_centr}.
So, recalling \eqref{eq:4}, we get
 $d,c \in \langle a_n \rangle^{t} $ in $G_n$, where $t \in N_n$ is the element such that $z=t a_n^\e$, for some $\e \in \{0,1\}$.
  Thus we can again take $L_n=G_n$, $M_n=N_n$ and $t$ as above.

So, further we can assume that $d$ is not commensurable to $a_n$ in $G_n$ and $d \notin H_n^{N_n}$.

\noindent\emph{Case 1:}  $d \notin N_n$. We claim that $d$ cannot be commensurable to any element from $H_n$, so
\begin{equation}\label{eq:Case_1-non-comm}
\langle d \rangle^{G_n} \cap H_n=\{1\}.
\end{equation}
Indeed, if $d \stackrel{G_n}{\approx} u \in H_n$,
then $d$ would be conjugate into $\C_{G_n}(u)$ by Lemma~\ref{lem:comm->conj_into_centr}.
But $u \notin \langle a_n \rangle^{G_n}$, as $d \stackrel{G_n}{\not\approx} a_n$, hence   $\C_{G_n}(u) \subseteq H_n \subset N_n$ by \eqref{eq:4}.
The latter would imply that $d \in N_n$, because $N_n$ is normal in $G_n$, contradicting the assumption of this case.

So, in Case 1 we define $L_n$ as the HNN-extension
\begin{equation}\label{eq:case_1}
L_n=\langle G_n, t \,\|\, ta_n t^{-1}=d\rangle.
\end{equation}

\noindent\emph{Case 2:} $d \in N_n$. Since $H_n \cong H_0$ is non-elementary, by \cite[Lemma 3.8]{Olsh} there exists an infinite order
 element $e \in H_n$ such that
 \begin{equation}\label{eq:Case_2-non-comm}
 e \stackrel{G_n}{\not\approx} a_n,~
e \stackrel{G_n}{\not\approx} d \text{ and } \C_{G_n}(e)=\langle e \rangle.
\end{equation}
Thus in Case 2 we define $L_n$ as the HNN-extension
\begin{equation}\label{eq:case_2} L_n=\langle G_n, t \,\|\, te t^{-1}=d\rangle.\end{equation}

Now suppose that we are in one of the above two cases. The group $L_n$ is torsion-free and hyperbolic, and $H_n$ is quasiconvex in $L_n$ by Lemmas~\ref{lem:elem}, \ref{lem:HNN-hyp}. Therefore $Q_n=H_n\cup H_n^{a_n}$ is quasiconvex in $L_n$ by Remark~\ref{rem:props-qc}.
Let $\xi: G_n \to \Z/2\Z$ denote the epimorphism with kernel $N_n$.
Observe that in each of the presentations \eqref{eq:case_1}, \eqref{eq:case_2} above the free letter $t$ conjugates elements which have the same image under $\xi$ in $\Z/2\Z$. Therefore $\xi$ can be extended to
an epimorphism $\widehat{\xi}:L_n \to \Z/2\Z$ by letting $\widehat{\xi}(t)=\overline{0}$. Observe that $M_n=\ker(\widehat{\xi})$ is
the normal closure of $N_n$ and $t$ in $L_n$, and satisfies
\begin{equation}\label{eq:M_n}
|L_n:M_n|=2 \text{ and } M_n \cap G_n=N_n.
\end{equation}

% It follows that $|L_n:M_n|=2$ and $M_n \cap \langle a_n \rangle=\langle a_n^2\rangle$.

We shall now check that conditions  \eqref{eq:4} and \eqref{eq:5} are satisfied for $i=n$, when $G_n$ is replaced by $L_n$ and $N_n$
is replaced by $M_n$. Note that the associated subgroups
($\langle a_n \rangle$ and $\langle d \rangle$ in Case~1, and $\langle e \rangle$ and $\langle d\rangle$ in Case~2)
of the  HNN-extension $L_n$ are malnormal in $G_n$ by Lemma~\ref{lem:elem}, and their $G_n$-conjugates
can only intersect trivially. Therefore any  malnormal cyclic subgroup of $G_n$ will remain malnormal in $L_n$ by part (a) of  Lemma~\ref{lem:HNN-conj}.
It follows that
\begin{equation}\label{eq:4-L_n}
\C_{L_n}(a_n^l)=\langle a_n \rangle, ~\forall\, l \in \Z\setminus\{0\}, \text{ and } \C_{L_n}(u)=\C_{G_n}(u) \subseteq H_n ,
~\forall\,
u \in H_n \setminus\langle a_n^2\rangle^{H_n}.
\end{equation}

Assume that for some $l \in 2\N$,  $a_n^l$ is conjugate in $L_n$ to an element $h \in H_n$. We will show that these two elements are actually conjugate in $G_n$. Arguing by contradiction, suppose that $h \notin (a_n^l)^{G_n}$. If $h \in (a_n^k)^{G_n}$, for $k \neq l$, then, since $\langle a_n \rangle$ is malnormal in $L_n$, $a_n^k$ cannot be conjugate to $a_n^l$ in $L_n$, contradicting our assumption. Thus $h \notin \langle a_n \rangle^{G_n}$. If we are in Case 1 above, the latter, combined with \eqref{eq:Case_1-non-comm}, implies that $a_n^l \notin h^{L_n}$ by part (b) of  Lemma~\ref{lem:HNN-conj},
which again contradicts the assumption that $h$ is conjugate to $a_n^l$ in $L_n$. Similarly, in Case~2, in view of \eqref{eq:Case_2-non-comm}
and the assumption  that
$a_n \stackrel{G_n}{\not\approx} d$, part (b) of  Lemma~\ref{lem:HNN-conj} shows that $h \notin (a_n^l)^{L_n} $, leading to another contradiction. Thus we have shown that $a_n^l \in h^{G_n}$. After recalling \eqref{eq:5}, we see that the following has been established:
\begin{equation}\label{eq:5-L_n}
\text{if } a_n^l \in h^{L_n}, \text{ for some } l \in 2\N \text{ and } h \in H_n, \text{ then } a_n^l\in h^{H_n}.
\end{equation}

We will now construct the group $G_{n+1}$ as a quotient of $L_n$, using Theorem~\ref{thm:pres_qc}. Denote $F=\langle a_n, b_n^c \rangle \cap N_n \leqslant G_n$
and observe that $|\langle a_n, b_n^c \rangle :F|=2$. Note that the subgroup $\langle a_n^2,b_n^c\rangle \leqslant F$ is non-elementary (otherwise $a_n^{l} = (b_n^k)^c$, for some $l \in 2\N$, $k \in \Z\setminus\{0\}$, which, by \eqref{eq:5}, would imply that $a_n^l \in (b_n^k)^{H_n}$, contradicting the fact that $a_n^2$ and $b_n$ freely generate $H_n$ by \eqref{eq:3}). Therefore $F$ is non-elementary.

Let us prove that $Q_n=H_n \cup H_n^{a_n}$ is small relative to $F$ in $L_n$. Indeed, by Theorem~\ref{thm:small-crit}, it is sufficient to show that $|F:(F \cap H_n^x)|=\infty$ for all $x \in L_n$. Arguing by contradiction, assume that
\begin{equation}\label{eq:fin_ind}
|F:(F \cap H_n^x)|<\infty \text{ for some } x \in L_n.
\end{equation}

%Let $\mathcal{T}$ be the Bass-Serre tree associated to the splitting of $L_n$ as an HNN-extension \eqref{eq:case_1} or \eqref{eq:case_2}. Then $G_n$ is the stabilizer of some vertex $v$ of $\mathcal T$. By construction, the edge stabilizers for the action of $L_n$ on $\mathcal T$ are cyclic. Therefore for any $y \in L_n \setminus G_n$, $G_n \cap G_n^y$ must be cyclic (as $G_n^y$ will fix the vertex $y\,v \neq v$ in $\mathcal T$). Since $F \leqslant G_n$ is non-elementary, \eqref{eq:fin_ind} implies that
%$x \in G_n$.

Since $a_n^2 \in F$, by \eqref{eq:fin_ind} there must exist $l \in 2\N$ and $h \in H_n$ such that $a_n^l=h^x$. So, in view of \eqref{eq:5-L_n}, we can find
$y \in H_n$ satisfying $a_n^l=h^y$. The latter yields that $h=y^{-1}a_n^l y=x^{-1}a_n^l x$, so that $x y^{-1} \in \C_{L_n}(a_n^l)=\langle a_n \rangle$ by \eqref{eq:4-L_n}. Hence $x = a_n^k y$, for some $k \in \Z$, and $H_n^x=a_n^k H_n a_n^{-k}$. Since $F$ is normalized by $a_n$, by definition, the latter,
combined with \eqref{eq:fin_ind}, implies that $|F:(F \cap H_n)|<\infty$ and $|F:(F \cap H_n^{a_n})|<\infty$, hence
\begin{equation}\label{eq:fin_ind-2}
|F:(F \cap (H_n \cap H_n^{a_n}))|<\infty.
\end{equation}
But, by the injectivity of $\psi_n$ on $Q_0$ \eqref{eq:3} and part (ii) of Lemma~\ref{lem:H-props}, we have
 \[H_n\cap H_n^{a_n}=\psi_n(H_0) \cap \psi_n(H_0^{a_0})=\psi_n(H_0 \cap H_0^{a_0})=\psi_n(\langle a_0^2 \rangle)=\langle a_n^2\rangle.\]
 Thus \eqref{eq:fin_ind-2} yields $|F:(F \cap \langle a_n^2 \rangle)|<\infty$, contradicting the fact that $F$ is a non-elementary subgroup of $G_n$. Hence we have proved that $Q_n$ is small relative to $F$ in $L_n$.

Therefore we can apply Theorem~\ref{thm:pres_qc} to $L_n$, $F$, $Q_n$ and $U=\{b_n,t\}$. Let $w_1,w_2 \in F$ be the elements from the claim of this theorem, let $K_n$ denote the normal closure of the elements $b_nw_1$ and $tw_2$ in $L_n$, let $G_{n+1}=L_n/K_n$,
let  $\eta_{n+1}: L_n \to G_{n+1}$ be the natural epimorphism
and $\phi_{n+1}:G_n\to G_{n+1}$ be the restriction of $\eta_{n+1}$ to $G_n\leqslant L_n$. Denote $\psi_{n+1}=\phi_{n+1}\circ \psi_n:G_0\to G_{n+1}$,
$a_{n+1}=\phi_{n+1}(a_n)=\psi_{n+1}(a_0)$, $b_{n+1}=\phi_{n+1}(b_n)=\psi_{n+1}(b_0)$, etc., as in the beginning of the proof, for $i=n+1$. Then the following conditions will be satisfied by Theorem~\ref{thm:pres_qc}.
\begin{enumerate}[(a)]
  \item $G_{n+1}$ is a torsion-free hyperbolic group. Moreover, since $L_n=\langle a_n,b_n,t\rangle$ and
        $\eta_{n+1}(t)=\eta_{n+1}(w_2^{-1})\in \eta_{n+1}(G_n)$, we see that $\eta_{n+1}(G_n)=G_{n+1}$, i.e., $\phi_{n+1}:G_n \to G_{n+1}$ is surjective and
         $G_{n+1}=\langle a_{n+1},b_{n+1} \rangle$. Hence \eqref{eq:1} holds for $i=n+1$.
  \item The restriction of $\phi_{n+1}$ to $Q_n$ is injective and $H_{n+1}=\phi_{n+1}(H_n)$, $Q_{n+1}=\phi_{n+1}(Q_n)$ are quasiconvex in $G_{n+1}$.
         Thus \eqref{eq:3} holds for $i=n+1$.
 \item For all $x \in H_n$, $\C_{G_{n+1}}(\eta_{n+1}(x))=\eta_{n+1}(\C_{L_n}(x))$. In view of \eqref{eq:4-L_n} and the injectivity of $\eta_{n+1}$ on $H_{n}$,
        this implies that  \eqref{eq:4} holds for $i=n+1$.
 \item $\eta_{n+1}$ preserves conjugacy on $H_n$. In view of \eqref{eq:5-L_n},
        this implies that  \eqref{eq:5} holds for $i=n+1$.
\end{enumerate}

Observe that $b_{n+1}=\phi_{n+1}(b_n)=\phi_{n+1}(w_1^{-1}) \in \phi_{n+1}(F)$ in $G_{n+1}$. Since $F \subset \langle a_n,b_n^c \rangle$ and
$G_{n+1}=\langle a_{n+1},b_{n+1} \rangle$, we can conclude that
$G_{n+1}=\phi_{n+1}(\langle a_{n},b_{n}^c \rangle)=\langle a_{n+1},b_{n+1}^{g_{n+1}} \rangle$, where
$g_{n+1}=\phi_{n+1}(c)=\psi_{n+1}(f_{n+1})$. Thus \eqref{eq:7} holds for $i=n+1$, and it remains to check that $G_{n+1}$ satisfies
\eqref{eq:2} and \eqref{eq:6}.

Note that $N_{n+1}=\phi_{n+1}(N_n)$ is a normal subgroup in $G_{n+1}$ of index at most $2$ because
$\phi_{n+1}$ is surjective and $|G_n:N_n|=2$.
Obviously $N_{n+1} \subseteq \eta_{n+1}(M_n)$. The opposite
inclusion follows from the fact that $M_n$ is generated, as a normal subgroup of $L_n$, by $N_n$ and $t$, and
\begin{equation}\label{eq:eta(t)_in_N}
\eta_{n+1}(t)=\eta_{n+1}(w_2^{-1}) \in \eta_{n+1}(F) \subseteq \eta_{n+1}(N_n)=N_{n+1}.
\end{equation}
 Hence $N_{n+1}=\eta_{n+1}(M_n)$ in $L_n$. Recall that, by construction, $b_n,t \in M_n$ and
$w_1,w_2 \in F \subset M_n$ in $L_n$, yielding that $\ker \eta_{n+1}=K_n \subseteq M_n$. Therefore $G_{n+1}/N_{n+1} \cong L_n/M_n\cong \Z/2\Z$, i.e., $N_{n+1}$
has index $2$ in $G_{n+1}$. It follows that condition  \eqref{eq:2} holds for $i=n+1$.

By construction, $\psi_n(f_{n+1})=c$ either belongs to $\langle a_n \rangle^t $ (Case 1) or to  $ H_n^t$ (Case~2) in $L_n$. Therefore condition
\eqref{eq:6} for $i=n+1$ follows from \eqref{eq:eta(t)_in_N}. This concludes the inductive step in our argument, and thus finishes the construction of
a sequence of hyperbolic groups $G_0,G_1,G_2,\dots$,  together with epimorphisms $\phi_i:G_{i-1}\to G_i$, $i \in \N$, satisfying properties
\eqref{eq:1}--\eqref{eq:7} above.

We can now define the group $G$ as the direct limit of the sequence $(G_{i-1},\phi_i)_{i \in \N}$. In other words, $G=G_0/K$, where
$K=\bigcup_{i=1}^\infty \ker \psi_i$. Let $\psi:G_0 \to G$ be the natural epimorphism, so that $\ker\psi=K$ and $\psi$ factors through $\psi_i:G_0 \to G_i$, for each $i \in \N$.

Let $a=\psi(a_0)$, $b=\psi(b_0)$, $H=\psi(H_0)=\langle a^2,b\rangle \leqslant G$ and
$N=\psi(N_0)=\langle a^2,b,b^a\rangle\lhd G$. We will now check that $G$, $H$ and $N$ satisfy the properties from the claim of Theorem~\ref{thm:fin_ind}.

The group $G$ is torsion-free as a direct limit of torsion-free groups $G_i$.
The index $|G:N|=2$ because, by \eqref{eq:2}, $\ker\psi_i \subseteq N_{0}$ for
all $i \in \N$. By \eqref{eq:3}, each $\psi_i$ is injective on $Q_0$, hence the same is true for $\psi$. It follows that
$H \cap H^a=\psi(H_0 \cap H_0^{a_0})=\langle a^2 \rangle $ (by Lemma~\ref{lem:H-props}), and $H \cong H_0$ is freely generated by $a^2$ and $b$.
If $|N:H|<\infty$ then $|N:H^a|<\infty$, since $a$ normalizes $N$, hence $|N:(H \cap H^a)|<\infty$, which would mean that $N$ is virtually cyclic, contradicting the
fact that it contains a non-abelian free subgroup $H$. Therefore we can deduce that $|N:H|=\infty$.

Given any $x,y \in G$, the subgroup $\langle a^x,b^y\rangle$ is conjugate to the subgroup $\langle a,b^g \rangle$, where $g=x^{-1}y$. By construction,
$g=\psi(f_i)$, for some $i \in \N\cup\{0\}$, and \eqref{eq:7} implies that $G_i$ is generated by $a_i=\psi_i(a_0)$ and $b_i^{g_i}=\psi_i(b_0^{f_i})$.
It follows that $G=\langle a,b^g \rangle$, since $\psi$ factors through $\psi_i$. Hence $G=\langle a^x,b^y\rangle$ and
$\{a,b\}$ is an invariable generating set for $G$.

It remains to show that every element of $N$ is conjugate to an element of $H$ in $N$.
Indeed, \eqref{eq:6} easily implies that $G=H^N \cup \langle a \rangle^N$, and since
\[\langle a \rangle^N \cap N=(\langle a \rangle \cap N)^N=\langle a^2 \rangle ^N \subset H^N,\]
we can conclude that $N=H^N$, as required. This finishes the proof of Theorem~\ref{thm:fin_ind}.
\end{proof}

\begin{rem}\label{rem:cyc_centr-1} Let $G$ be the group constructed in the proof of Theorem~\ref{thm:fin_ind}. Then centralizers of non-trivial elements in $G$ are cyclic.
\end{rem}

Indeed, for any $g \in G\setminus\{1\}$, $g$ has infinite order and $g^2 \in N$, so, by claim (iv)
of the theorem, $g^2$ is conjugate to some element $u \in H\setminus\{1\}$. If $\C_G(u) \subseteq H$, then $\C_G(u)=\C_H(u)$ is cyclic, as $H$ is free.
Otherwise, \eqref{eq:4} implies that $u \in \langle a^2 \rangle^H$ and $\C_G(u)$ is conjugate to
 $\langle a \rangle$. Thus $\C_{G}(u)$ is cyclic, so the same is true for $\C_G(g^2)$. Therefore $\C_G(g)$ must also be cyclic, as a subgroup of $\C_G(g^2)$
(in fact, in this case $\C_G(g)=\C_G(g^2)$).

\begin{rem}\label{rem:add_props_Thm1}
The construction of the group $G$ and its index two subgroup $N$ in Theorem~\ref{thm:fin_ind} is fairly flexible, and many additional properties can be
achieved:
\begin{itemize}

  \item by adding a sufficiently large finite subset of $G_i$ to $Q_i$ at each step, one can ensure that $G$ is lacunary hyperbolic (see \cite{OOS} for the definition and properties of lacunary hyperbolic groups);
  \item by modifying $N_0$ and $G_0$ with the help of Theorem~\ref{thm:pres_qc}, one can arrange $N$ to be a quotient of any given torsion-free non-elementary hyperbolic group; in particular, $N$ and $G$ can be made to satisfy Kazhdan's property (T);
  \item by changing the construction at each step, one can achieve even more and ensure that $N$ is a common quotient of all non-cyclic torsion-free hyperbolic groups;

  \item it should be possible to extend the method of proof to produce examples of FIG groups with non-IG subgroups of index $k$, for every $k \ge 2$, though additional technical modifications will be necessary.
\end{itemize}
\end{rem}

\section{Finitely generated IG groups that are not FIG}
\begin{proof}[Proof of Theorem~\ref{thm:IG_not_FIG}]
As before, the desired group $G$ will be constructed as a direct limit of hyperbolic groups $G_i$, $i \ge 0$.
Let $G_0$ be the free group of rank $3$, freely generated by $\{a_0,b_0,c_0\}$, and set $Q_0=H_{00}=\{1\} \leqslant G_0$.
Let $S_0=\emptyset,S_1=\{1\},S_2, \dots$ be an enumeration of all finite subsets of $G_0$, let
$Y_0=\{1\},Y_1=\langle a_0 \rangle,Y_2=\langle a_0,b_0\rangle, \dots$ be an enumeration of all finitely generated subgroups of $G_0$.

Let $G_1$ be a copy of $G_0$, with a fixed isomorphism $\phi_1:G_0 \to G_1$. Define $H_{10}=\phi_1(H_{00})=\{1\}$,
$H_{11}=\langle b_1,c_1 \rangle$, where
$b_1=\phi_1(b_0) \in G_1$, $c_1=\phi_1(c_0) \in G_1$, and let $Q_1=H_{10} \cup H_{11}=\phi_1(Q_0) \cup H_{11} \subset G_1$.

Now suppose that for some $n \ge 1$ and each $i=1,\dots,n$ we have already constructed a group $G_i$, an epimorphism $\phi_i:G_{i-1} \to G_i$, and
subgroups $H_{ij} \leqslant G_i$, $j=0,1,\dots,i$, such that $H_{ij}=\phi_i(H_{i-1,j})$, whenever $j\in \{0,\dots,i-1\}$.
We let $\psi_i : G_0 \to G_i$ be the epimorphism defined by $\psi_i=\phi_i \circ \dots \circ \phi_1$,
set $T_i=\psi_i(S_i) \subset G_i$, $Z_i=\psi_i(Y_i) \leqslant G_i$,
and $Q_i=\bigcup_{j=0}^i H_{ij}=\phi_i(Q_{i-1}) \cup H_{ii} \subseteq G_i$.

Arguing by induction, we assume that the following conditions hold for every $i=1,\dots, n$:
\begin{flalign}
&G_i \text{ is a non-elementary torsion-free hyperbolic group};\label{eq:21}&&\\[2ex]
&\phi_i \text{ is injective on } Q_{i-1}, \text{ and } H_{ij} \text{ is free and quasiconvex in } G_i,~j=0,\dots,i; \label{eq:22}&&\\[2ex]
&\text{$\phi_i$ preserves conjugacy and centralizers on $Q_{i-1}$}; \label{eq:23}&& \\[2ex]
&\text{$H_{ij}$ is malnormal in $G_i$, for all $j=0,\dots,i$}; \label{eq:24}&& \\[2ex]
&\text{for all } g \in G_i, ~ 0 \le j<k \le i,\text{ the intersection } H_{ij}^g \cap H_{ik} \text{ is cyclic}; \label{eq:25}&&\\[2ex]
&|G_i:H_{ii}|=\infty \text{ and there exists } x \in H_{ii}\setminus\{1\} \text{ such that } \langle x \rangle \cap \left(\bigcup_{j=1}^{i-1}H_{ij}^{G_i}\right)=\{1\}; \label{eq:26}&&\\[2ex]
&T_i \subseteq H_{ii}^{G_i}; \label{eq:27}&& \\[2ex]
&\text{one of the following conditions is satisfied:} \label{eq:28}&&\\
&\text{\textbullet}~ Z_i=G_i, \text{ or } &&\notag\\
&\text{\textbullet}~ Z_i \text{ is cyclic, or }&&\notag\\
&\text{\textbullet}~ |Z_i:(Z_i\cap H_{ij}^u)|<\infty \text{ for some } j=0,1,\dots,i, \text{ and some } u \in G_i. \notag&&
\end{flalign}

Note that for $i=1$ the group  $G_1$, the epimorphism $\phi_1:G_0 \to G_1$, and the subgroups $H_{10},H_{11} \leqslant G_1$ satisfy all of the above properties by definition,
so the base of induction has been established. Our aim now is to construct a group $G_{n+1}$, an epimorphism $\phi_{n+1}:G_n \to G_{n+1}$ and a subgroup $H_{n+1,n+1} \leqslant G_{n+1}$,
enjoying properties \eqref{eq:21}--\eqref{eq:28} for $i=n+1$.
As before, our method involves an intermediate torsion-free hyperbolic group $L_n$ and a free quasiconvex subgroup $X \leqslant L_n$, such that $G_n \leqslant L_n$ and $\psi_n(S_{n+1})$ is pointwise conjugate into $X$ in $L_n$. The group $G_{n+1}$ will be obtained as a small cancellation quotient of $L_n$, and the subgroup $H_{n+1,n+1} \leqslant G_{n+1}$ will be defined as the image of $X$ in $G_{n+1}$.

Recall that $\psi_n:G_0 \to G_n$ is the epimorphism $\phi_n\circ \dots \circ \phi_1$, and suppose that
$\psi_n(S_{n+1})\setminus\{1\}=\{s_1,\dots,s_l\} \subset G_n$, for some
$l \in \N \cup \{0\}$. Let $d_1,\dots,d_l \in G_n$ be the elements satisfying $\C_{G_n}(s_k)=\langle d_k \rangle$, for $k=1,\dots,l$ (cf. Lemma~\ref{lem:elem}).
Let $X$ be a free group of rank $l+1$, with free basis $\{x_0,x_1,\dots,x_l\}$. We define the group $L_n$ by the following presentation:
\begin{equation}\label{eq:Ln-2}
L_n=\langle G_n,x_0,\dots,x_l,t_1,\dots,t_l \,\|\, t_k x_k t_k^{-1}=d_k, ~k=1,\dots,l \rangle.
\end{equation}

Since $G_n$ is torsion-free, each of $d_1,\dots,d_l$ has infinite order in $G_n$, so $L_n$ is an $l$-fold HNN-extension of the free product $G_n*X$ with associated cyclic subgroups. The group $G_n*X$ is hyperbolic, as a free product of hyperbolic groups, and $G_n$, $X$ are both quasiconvex in it (for example,
by \cite[Lemma~1.2]{Min-G-sbgps}). After applying Lemma~\ref{lem:HNN-hyp} $l$ times, we see that $L_n$ is a torsion-free hyperbolic group and $G_n$, $X$ are quasiconvex in it. Therefore $H_{nj}$ is a quasiconvex subgroup of $L_n$, for each $j=0,\dots,n$, by part (5) of Remark~\ref{rem:props-qc}.

Now, $G_n$ and $X$ are malnormal in $G_n*X$, being free factors, hence $\langle d_k\rangle$ and $H_{nj}$ are malnormal in $G_n*X$, for all $k=1,\dots,l$ and
 $j=0,\dots,n$, by \eqref{eq:24}.
Observe that for any group $A$ and any cyclic group $\langle x \rangle$, $\langle x \rangle$ and every malnormal subgroup of $A$ are malnormal in the free product $A*\langle x \rangle$; moreover,  no non-trivial element of $A$ is conjugate to an element of $\langle x \rangle$. Therefore we can apply claim (a) of Lemma~\ref{lem:HNN-conj} $l$ times to show that $H_{nj}$ and
$X$ are malnormal in $L_n$, for every $j=0,\dots,n$. It follows that $\C_L(h) \subseteq H_{nj}$ for all $h \in H_{nj}\setminus\{1\}$,
$j=0,\dots,n$; in particular,
\begin{equation}\label{eq:central-pres-in-Ln}
\C_L(h)=\C_{G_n}(h), \text{ for every } h \in Q_n\setminus\{1\}.
\end{equation}

From the presentation \eqref{eq:Ln-2}
it is easy to see that there is a retraction $\rho:L_n \to G_n$, such that the restriction of $\rho$ to $G_n$ is the
identity map, $\rho(x_0)=1$, $\rho(x_k)=d_k$ and $\rho(t_k)=1$ for all $k=1,\dots, l$. It follows that
\begin{equation}\label{eq:retr-conj}
\text{two elements of $G_n$ are conjugate in $L_n$ if and only if they are conjugate in $G_n$.}
\end{equation}

For the next part it will be more convenient to think of $L_n$ as the fundamental group of a graph of groups with two vertices $v_1,v_2$ and
$l+1$ edges $e_0,\dots,e_l$, joining these two vertices: see Figure~\ref{fig:graph_of_gps}.
\begin{figure}[ht]
  \begin{center}
   \includegraphics{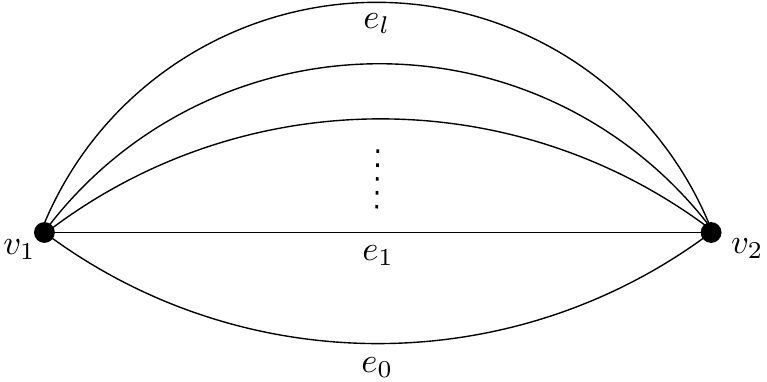}
  \end{center}
\caption{The underlying graph for the splitting of $L_n$.}\label{fig:graph_of_gps}
\end{figure}
The vertex groups in this splitting will be $G_n$ for $v_1$ and $X$ for $v_2$, the edge group for $e_0$ will be trivial, and the edge groups for $e_1, \dots,e_l$
will be infinite cyclic. This gives rise to an action of $L_n$ on the Basse-Serre tree $\mathcal T$ corresponding to this splitting. Vertex stabilizers for this
action are conjugates of $G_n$ or $X$ in $L_n$ and edge stabilizers are cyclic. Hence the intersection of stabilizers of two distinct vertices of $\mathcal T$
is cyclic, which immediately yields the following observations:
\begin{equation}\label{eq:intersec_G-X}
G_n^u \cap X \text{ is cyclic, for all } u \in L_n, \text{ and }
\end{equation}
\begin{equation}\label{eq:intersec_G-G}
\text{ if } w \in L_n \setminus G_n \text{ then } G_n^w \cap G_n \text{ is cyclic}.
\end{equation}
The last observation, combined with \eqref{eq:25}, implies
\begin{equation}\label{eq:intersec_H-H}
\text{for all } u \in L_n, ~ 0 \le j<k \le n,\text{ the intersection } H_{nj}^u \cap H_{nk} \text{ is cyclic}.
\end{equation}

Let $Q=Q_n \cup X=\bigcup_{j=1}^n H_{nj} \cup X \subset L_n$ and $Z=\psi_n(Y_{n+1}) \leqslant G_n$.
If $Z$ is non-elementary and $Q$ is small relative to $Z$ in $L_n$,
then set $F=Z$. Otherwise, set $F=G_n$. Recall that $G_n$ is torsion-free and non-elementary (by \eqref{eq:21}), hence  \eqref{eq:26} implies that
$|G_n:H_{nj}|=\infty$, for each $j=0,\dots,n$. Combined with \eqref{eq:intersec_G-G}, this yields that $|G_n:(G_n \cap w^{-1}H_{nj}w)|=|G_n^w:(G_n^w \cap H_{nj})|=\infty$ for all $w \in L_n$
and $j=0,\dots,n$. On the other hand, $|G_n:(G_n \cap u^{-1}X u)|=\infty$ for all $u \in L_n$ by \eqref{eq:intersec_G-X}. Therefore we can apply Theorem~\ref{thm:small-crit} to conclude that
$Q$ is small relative to $G_n$ in $L_n$.
 Thus, in any case, $F$ is non-elementary, $Q$ is small relative to $F$ in $L_n$ and $F \subseteq G_n$.

Since $Q$ is quasiconvex in $L_n$ (by part (4) of Remark~\ref{rem:props-qc}), we can apply Theorem~\ref{thm:pres_qc} to $L_n$, $F$, $Q$ and
$U=\{x_0,\dots,x_l,t_1,\dots,t_l\}$. Let $G_{n+1}$ denote the resulting quotient of $L_n$, let $\eta_{n+1}:L_n \to G_{n+1}$ be the natural epimorphism, which identifies each element of $U$ with some element of $F$ in $G_{n+1}$. Since $F \subseteq G_n$ and $L_n=\langle G_n,U\rangle$, we see that $G_{n+1}=\eta_{n+1}(L_n)=\eta_{n+1}(G_n)$, i.e.,
 the restriction $\phi_{n+1}:G_n \to G_{n+1}$,
of $\eta_{n+1}$ to $G_n$, is surjective. Set $H_{n+1,n+1}=\eta_{n+1}(X) \leqslant G_{n+1}$, $H_{n+1,j}=\phi_{n+1}(H_{nj}) \leqslant G_{n+1}$ if $0 \le j \le n$,
and $Q_{n+1}=\bigcup_{j=0}^{n+1} H_{n+1,j}$.
Let $\psi_{n+1}=\phi_{n+1} \circ \psi_n:G_0 \to G_{n+1}$,
$T_{n+1}=\psi_{n+1}(S_{n+1}) \subset G_{n+1}$, $Z_{n+1}=\psi_{n+1}(Y_{n+1})\leqslant G_{n+1}$.

Theorem~\ref{thm:pres_qc} implies that all of the following hold.
\begin{enumerate}[(a)]
  \item $G_{n+1}$ is a non-elementary torsion-free hyperbolic group, so \eqref{eq:21} is true for $i=n+1$.
  \item The restriction of $\eta_{n+1}$ to $Q=Q_n\cup X$ is injective, and $H_{n+1,j}$ is a quasiconvex subgroup of $G_{n+1}$, for all $j=0,\dots,n+1$.
    It follows that $\phi_n$ is injective on $Q_n$, and $H_{n+1,j} \cong H_{nj}$, $0 \le j \le n$, $H_{n+1,n+1} \cong X$ are free subgroups of $G_{n+1}$.
    Thus \eqref{eq:22} holds for $i=n+1$.
    \item $\eta_{n+1}$ preserves centralizers on $Q$, hence, by \eqref{eq:central-pres-in-Ln}, $\phi_{n+1}$ preserves centralizers on $Q_n$.
 \item $\eta_{n+1}$ preserves conjugacy on $Q$:
    \begin{equation}\label{eq:conj_in_Gn+1}
    \text{if $g,h \in Q_{n} \cup X$, then $\eta_{n+1}(h) \in \eta_{n+1}(g)^{G_{n+1}}$  in $G_{n+1}$ implies $h \in g^{L_{n}}$ in $L_{n}$}.
    \end{equation}
    Combined with \eqref{eq:retr-conj}, this immediately shows that $\phi_{n+1}$ preserves conjugacy on $Q_n$.
\end{enumerate}

Claims (c) and (d) above imply that \eqref{eq:23} holds for $i=n+1$. We can also apply Lemma~\ref{lem:malnorm_pres}
to deduce that for every $j=0,\dots,n+1$, $H_{n+1,j}$ is malnormal in $G_{n+1}$, i.e., $\eqref{eq:24}$ is satisfied for $i=n+1$.

The fact that \eqref{eq:25} is satisfied in $G_{n+1}$ follows from Lemma~\ref{lem:cyc_intersec_preserved}. Indeed, for any $0 \le j<k\le n+1$,
$H_{n+1,j}$ and $H_{n+1,k}$ are images, under $\eta_{n+1}$, of subgroups contained in $Q_n \cup X$ in $L_n$, so the assumptions of Lemma~\ref{lem:cyc_intersec_preserved} are satisfied by \eqref{eq:conj_in_Gn+1}, \eqref{eq:intersec_H-H} and \eqref{eq:intersec_G-X}.

By the above argument, $H_{n+1,n+1} \cap H_{n+1,1}$ is cyclic, and since $H_{n+1,1} \cong H_{11} $ (by \eqref{eq:22}) is a free group of rank $2$,
we can immediately deduce that $H_{n+1,n+1}$ must have infinite index in $G_{n+1}$. Now, since the generator $x_0$, of $X$, does not
participate in any of the defining relations from \eqref{eq:Ln-2}, we see that $L_n \cong B*\langle x_0\rangle$, where
$B=\langle G_n,x_1,\dots,x_l,t_1,\dots,t_l\rangle \leqslant L_n$. In particular, $\langle x_0 \rangle \cap H_{nj}^{L_n}=\{1\}$, for all $j=0,\dots,n$, which can
be combined with \eqref{eq:conj_in_Gn+1} to give $\langle \eta_{n+1}(x_0) \rangle \cap H_{n+1,j}^{G_{n+1}}=\{1\}$, for all $j=0,\dots,n$.
As $\eta_{n+1}(x_0)$ is an infinite order element of $\eta_{n+1}(X)=H_{n+1,n+1}$, we can conclude that \eqref{eq:26}
is satisfied for $i=n+1$.

The inclusion \eqref{eq:27} for $i=n+1$ is an immediate consequence of the construction of $L_n$ as the HNN-extension \eqref{eq:Ln-2}, and it remains to verify
that $G_{n+1}$ satisfies \eqref{eq:28} for $i=n+1$, where $Z_{n+1}=\psi_{n+1}(Y_{n+1})=\eta_{n+1}(Z)$.
But this is indeed the case due to our choice of the subgroup $F \leqslant G_n$ above and Theorem~\ref{thm:small-crit}.

Thus we have checked that $G_{n+1}$, $\phi_{n+1}:G_n\to G_{n+1}$ and $H_{n+1,n+1}\leqslant G_{n+1}$ satisfy all of the properties \eqref{eq:21}--\eqref{eq:28} for $i=n+1$, which
completes our inductive construction.

Let $G$ be the direct limit of the sequence $(G_{i-1},\phi_{i})_{i \in \N}$. Then $G$ can be described as the quotient of $G_0$ by
$K=\bigcup_{i=1}^\infty \ker \psi_i$, and the natural epimorphism $\psi:G_0 \to G$ factors through $\psi_i:G_0 \to G_i$, for each $i \in \N$.
Let $\xi_i:G_i \to G$ denote the resulting epimorphism such that $\psi=\xi_i \circ \psi_i$, for each $i \in \N$.
A commutative diagram involving epimorphisms between the groups $G_0$, $G_i$, $G_{i+1}$ and $G$, $i \in \N$, discussed so far, is depicted in Figure~\ref{fig:comm_diag} below.
% https://tikzcd.yichuanshen.de
\begin{figure}[!ht]
  \begin{center}
\begin{tikzcd}
G_0 \arrow[r, "\psi_i"'] \arrow[rr, "\psi_{i+1}", bend left] \arrow[rd, "\psi"'] & G_i \arrow[r, "\phi_{i+1}"'] \arrow[d, "\xi_i"] & G_{i+1} \arrow[ld, "\xi_{i+1}"] \\
                                                                                 & G                                               &
\end{tikzcd}
  \end{center}
\caption{A commutative diagram of maps between $G_0$, $G_i$, $G_{i+1}$ and $G$.}\label{fig:comm_diag}
\end{figure}

\begin{lemma} \label{lem:conj_in_Q_i<->conj_in_G} For all  $j\ge 0$  the epimorphism $\xi_j :G_j \to G$ preserves conjugacy on $Q_j$.
\end{lemma}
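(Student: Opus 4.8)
The statement claims that for every $j \ge 0$, the epimorphism $\xi_j : G_j \to G$ preserves conjugacy on $Q_j$; that is, for $x, y \in Q_j$, if $\xi_j(y)$ is conjugate to $\xi_j(x)$ in $G$, then $y$ is already conjugate to $x$ in $G_j$. The plan is to use that $G$ is the \emph{direct limit} of the $G_i$ along the epimorphisms $\phi_i$, so conjugacy in $G$ must already occur at some finite stage $G_m$ with $m \ge j$, and then to push the conjugacy back down the tower from $G_m$ to $G_j$ using the fact (established in the inductive construction, property \eqref{eq:23}) that each $\phi_{i+1}$ preserves conjugacy on $Q_i$.

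\textbf{Step 1: reduce to a finite stage.} Fix $j \ge 0$ and $x, y \in Q_j$. Suppose $\xi_j(y) = g\,\xi_j(x)\,g^{-1}$ for some $g \in G$. Since $\xi_j = \xi_m \circ (\phi_m \circ \dots \circ \phi_{j+1})$ for every $m \ge j$, and since $\psi : G_0 \to G$ is surjective and factors through each $\psi_m$, we may pick $m \ge j$ and an element $h \in G_m$ with $\xi_m(h) = g$. Write $x_m = (\phi_m \circ \dots \circ \phi_{j+1})(x)$ and $y_m = (\phi_m \circ \dots \circ \phi_{j+1})(y)$, which lie in $Q_m$ (since each $\phi_i$ maps $Q_{i-1}$ into $Q_i$ by construction, as $H_{ij} = \phi_i(H_{i-1,j})$ and $Q_i = \phi_i(Q_{i-1}) \cup H_{ii}$). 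Then $\xi_m(y_m) = \xi_m(h\,x_m\,h^{-1})$ in $G$. The element $h^{-1}\,x_m^{-1}\,h\,y_m \in G_m$ lies in $\ker \xi_m$, and since $\ker \xi_m = \bigcup_{i > m} \ker(\phi_i \circ \dots \circ \phi_{m+1})$, there is some $m' > m$ such that this element dies already in $G_{m'}$. Replacing $m$ by $m'$ (and $x_m, y_m, h$ by their images in $G_{m'}$), we obtain $m \ge j$ and elements $x_m, y_m \in Q_m$, $h \in G_m$ with $y_m = h\,x_m\,h^{-1}$ in $G_m$.

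\textbf{Step 2: descend through the tower.} Now I induct downward on $m - j$. If $m = j$ we are done. If $m > j$, note $x_m = \phi_m(x_{m-1})$ and $y_m = \phi_m(y_{m-1})$ with $x_{m-1}, y_{m-1} \in Q_{m-1}$, and $\phi_m(y_{m-1}) = \phi_m(x_{m-1})^{\phi_m(h')}$ for any preimage $h' \in G_{m-1}$ of $h$ under the (surjective) map $\phi_m$. By property \eqref{eq:23}, $\phi_m$ preserves conjugacy on $Q_{m-1}$, so $\phi_m(y_{m-1}) \in \phi_m(x_{m-1})^{G_m}$ forces $y_{m-1} \in x_{m-1}^{G_{m-1}}$ in $G_{m-1}$. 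Iterating this down to stage $j$ yields $y \in x^{G_j}$ in $G_j$, as required.

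\textbf{Main obstacle.} The only delicate point is the bookkeeping in Step 1: conjugacy in the direct limit $G$ means that a particular product of elements lies in the kernel of $\xi_m$, and one must pass to a large enough finite stage where both the witnessing conjugator and the relevant relation are already realized. This is a standard direct-limit argument, but it must be phrased carefully because $\xi_m$ is not injective. Once one is at a finite stage, Step 2 is immediate from property \eqref{eq:23}. I expect the write-up to be short, with the bulk of the text devoted to making the ``pass to a finite stage'' step precise.
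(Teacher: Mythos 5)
Your argument is correct and takes essentially the same route as the paper: pass to a finite stage of the direct limit where the conjugacy is already realized, then descend to $G_j$ by applying property \eqref{eq:23} repeatedly (the paper streamlines your Step 1 by lifting $g$, $h$ and the conjugator all the way to $G_0$ and using $\ker\psi=\bigcup_{n\ge 1}\ker\psi_n$). Only a harmless slip: the element of $G_m$ that lies in $\ker\xi_m$ should be $h\,x_m\,h^{-1}y_m^{-1}$ (or $y_m^{-1}h\,x_m\,h^{-1}$), not $h^{-1}x_m^{-1}h\,y_m$.
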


\begin{proof}[Proof of Lemma \ref{lem:conj_in_Q_i<->conj_in_G}]
Suppose $\xi_j(h)=\xi_j(g)^q$, for some $g,h \in Q_j$ and some $q \in G$. Let $g_0,h_0,q_0 \in G_0$ be
arbitrary preimages of the elements $g,h,q$ in $G_0$, so that $g=\psi_j(g_0)$, $h=\psi_j(h_0)$ and $q=\psi(q_0)$.
Then $h_0^{-1}g_0^{q_0} \in \ker \psi=\bigcup_{n=1}^\infty \ker\psi_n$, so
$h_0^{-1}g_0^{q_0} \in \ker \psi_i$, for some $i \in \N$. Moreover, we can assume that $i>j$, as $\ker\psi_i \subseteq \ker\psi_{i+1}$ for all $i$.
Hence $g'=\psi_i(g_0)$ is conjugate to $h'=\psi_i(h_0)$ in $G_i$.
Let $\varphi: G_j \to G_i$ be the epimorphism defined by $\varphi= \phi_i \circ \dots \circ  \phi_{j+1}$, and
observe that $g'=\varphi(g)$ and $h'=\varphi(h)$, so that $g',h' \in \varphi(Q_j)$ in $G_i$. Since $h' \in (g')^{G_i}$ we can apply
\eqref{eq:23} $i-j$ times to conclude that $h \in g^{G_j}$ in $G_j$, as required.
\end{proof}

The next statement also follows from \eqref{eq:23} and can be proved similarly to Lemma~\ref{lem:conj_in_Q_i<->conj_in_G}.
\begin{lemma}\label{lem:xi_j-pres_central} For all  $j\ge 0$  the epimorphism $\xi_j :G_j \to G$ preserves centralizers on $Q_j$.
\end{lemma}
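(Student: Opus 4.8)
The plan is to imitate the proof of Lemma~\ref{lem:conj_in_Q_i<->conj_in_G}, replacing ``is conjugate to'' by ``centralizes'' throughout and invoking the centralizer-preservation half of \eqref{eq:23} in place of its conjugacy-preservation half. Fix $j\ge 0$ and $x\in Q_j$. One inclusion is free of charge: since $\xi_j$ is a homomorphism, $\xi_j(\C_{G_j}(x))\subseteq \C_G(\xi_j(x))$. So the content is the reverse inclusion $\C_G(\xi_j(x))\subseteq \xi_j(\C_{G_j}(x))$.

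For this, take $q\in \C_G(\xi_j(x))$ and choose preimages $x_0,q_0\in G_0$ with $\psi_j(x_0)=x$ and $\psi(q_0)=q$. Then $q_0x_0q_0^{-1}x_0^{-1}\in\ker\psi=\bigcup_{n=1}^\infty\ker\psi_n$, so this element lies in $\ker\psi_i$ for some $i$, and, since $\ker\psi_i\subseteq\ker\psi_{i+1}$, we may assume $i>j$. Write $q'=\psi_i(q_0)$ and, for $j\le k\le i$, let $x_k\in G_k$ denote the image of $x$ in $G_k$, i.e. $x_j=x$ and $x_k=\phi_k(x_{k-1})$, so that $x_i=\psi_i(x_0)$; note $x_k\in Q_k$ for all such $k$, because $x_j\in Q_j$ and $\phi_k(Q_{k-1})\subseteq Q_k$ by construction. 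In $G_i$ the relation $q_0x_0q_0^{-1}x_0^{-1}\in\ker\psi_i$ gives $q'x_iq'^{-1}=x_i$, i.e. $q'\in\C_{G_i}(x_i)$, and $\xi_i(q')=\xi_i(\psi_i(q_0))=\psi(q_0)=q$.

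Now I would descend from $G_i$ to $G_j$ one step at a time. Claim: for $j<k\le i$, if $q^{(k)}\in\C_{G_k}(x_k)$ and $\xi_k(q^{(k)})=q$, then there is $q^{(k-1)}\in\C_{G_{k-1}}(x_{k-1})$ with $\xi_{k-1}(q^{(k-1)})=q$. Indeed, since $x_{k-1}\in Q_{k-1}$ and $\phi_k$ preserves centralizers on $Q_{k-1}$ by \eqref{eq:23}, we get $\C_{G_k}(x_k)=\C_{G_k}(\phi_k(x_{k-1}))=\phi_k(\C_{G_{k-1}}(x_{k-1}))$, so $q^{(k)}=\phi_k(q^{(k-1)})$ for some $q^{(k-1)}\in\C_{G_{k-1}}(x_{k-1})$; and then $\xi_{k-1}(q^{(k-1)})=\xi_k(\phi_k(q^{(k-1)}))=\xi_k(q^{(k)})=q$ by the commutativity $\xi_{k-1}=\xi_k\circ\phi_k$ recorded in Figure~\ref{fig:comm_diag}. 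Starting from $q^{(i)}=q'$ and applying the claim $i-j$ times yields $q^{(j)}\in\C_{G_j}(x_j)=\C_{G_j}(x)$ with $\xi_j(q^{(j)})=q$, so $q\in\xi_j(\C_{G_j}(x))$, as desired.

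I do not expect a genuine obstacle here: this is the same ``telescoping along the direct system'' argument already used for conjugacy in Lemma~\ref{lem:conj_in_Q_i<->conj_in_G}. The only points requiring a moment's care are the ones flagged above: arranging $i>j$ so that $\phi_{j+1},\dots,\phi_i$ are all available; checking that each intermediate image $x_k$ stays inside $Q_k$, which is what makes \eqref{eq:23} applicable at every step; and keeping track of the commuting triangles of Figure~\ref{fig:comm_diag} so that the condition $\xi_k(q^{(k)})=q$ is propagated down the tower.
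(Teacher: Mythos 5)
Your proof is correct and is exactly what the paper intends: the paper only remarks that the lemma ``follows from \eqref{eq:23} and can be proved similarly to Lemma~\ref{lem:conj_in_Q_i<->conj_in_G}'', and your telescoping argument — lifting the commutation relation to $G_0$, pushing it into some $G_i$ with $i>j$, and descending one step at a time via the centralizer-preservation part of \eqref{eq:23}, noting the trivial inclusion $\xi_j(\C_{G_j}(x))\subseteq \C_G(\xi_j(x))$ — is precisely that adaptation. The points you flag (arranging $i>j$, the images staying in $Q_k$, and the commuting triangles $\xi_{k-1}=\xi_k\circ\phi_k$) are handled correctly.
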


We can now start checking that $G$ satisfies all the properties listed in the statement of Theorem~\ref{thm:IG_not_FIG}.
The group $G$ is indeed $2$-generated, because it is a quotient of $G_2$, and $G_2=Z_2$ is a quotient of
$Z=\langle a_1,b_1\rangle=\phi_1(Y_2)$, by construction ($Z$ is non-elementary and $Q_1=\langle b_1,c_1 \rangle$ is small relative to
$Z$ in $L_1$, as $\{a_1,b_1,c_1\}$ is a free generating set of $G_1$).
Since $G_i$ is torsion-free, for each $i \in \N$, $G$ is also torsion-free.

Observe that $\xi_j(H_{jj})=\xi_i(H_{ij})$ in $G$, whenever $0 \le j \le i$, and  denote this subgroup $H_j \leqslant G$. The epimorphism $\xi_j$
is injective on $H_{jj}$ by \eqref{eq:22}, so $H_j \cong H_{jj}$ is finitely generated and free.
Moreover, in view of \eqref{eq:24} and Lemmas~\ref{lem:conj_in_Q_i<->conj_in_G}, \ref{lem:xi_j-pres_central}, we can apply Lemma~\ref{lem:malnorm_pres} to conclude that
$H_j$ is malnormal in $G$. Thus claim (i) of Theorem~\ref{thm:IG_not_FIG} has been established.

Now, by \eqref{eq:26}, for each $i \in \N$ there is $x \in H_{ii}\setminus \{1\}$ such that $\langle x \rangle \cap H_{ij}^{G_i}=\{1\}$ in $G_i$,
whenever $0 \le j<i$. Set $r=\xi_i(x) \in H_i$, then $r \neq 1$ in $G$, by \eqref{eq:22}, and
$\langle r \rangle \cap H_j^G=\{1\}$, provided $0 \le j<i$, by Lemma~\ref{lem:conj_in_Q_i<->conj_in_G}. This proves claim (ii) of Theorem~\ref{thm:IG_not_FIG}.

If $S$ is an arbitrary finite subset of $G$, then $S=\psi(S_i)$, for some $i \in \N\cup\{0\}$, and $T_i=\psi_i(S_i)$ is pointwise conjugate into $H_{ii}$ in $G_i$, by \eqref{eq:27}. Consequently,
$S=\xi_i(T_i)$ is pointwise conjugate into the subgroup $H_i=\xi_i(H_{ii})$ in $G$, which establishes claim (iii) of Theorem~\ref{thm:IG_not_FIG}.

Claim (iv) of the theorem follows from the lemma below.

\begin{lemma}\label{lem:Hi_inter_Hj} For all $i,j \ge 0$, $i \neq j$, and all $h \in G$, the intersection $H_j^h \cap H_i$ is cyclic in $G$.
\end{lemma}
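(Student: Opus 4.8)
The plan is to lift property \eqref{eq:25}, available in each hyperbolic group $G_i$, to the direct limit $G$ by feeding it into Lemma~\ref{lem:cyc_intersec_preserved}. First I would note that it suffices to treat the case $j<i$: conjugating by $h^{-1}$ turns $H_j^h\cap H_i$ into $H_j\cap H_i^{h^{-1}}$, a conjugate subgroup, so $H_j^h\cap H_i$ is cyclic if and only if $H_i^{h^{-1}}\cap H_j$ is, and we may always assume the first index is the smaller one.

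So fix $0\le j<i$ and recall that, by construction, $H_j=\xi_i(H_{ij})$ and $H_i=\xi_i(H_{ii})$, where $H_{ij},H_{ii}\subseteq Q_i$ are quasiconvex subgroups of the hyperbolic group $G_i$ by \eqref{eq:22}. The two hypotheses of Lemma~\ref{lem:cyc_intersec_preserved}, applied with $L=G_i$, $H_1=H_{ij}$, $H_2=H_{ii}$ and $\eta=\xi_i$, are then both at hand: the map $\xi_i:G_i\to G$ preserves conjugacy on $Q_i\supseteq H_{ij}\cup H_{ii}$ by Lemma~\ref{lem:conj_in_Q_i<->conj_in_G}, and for every $g\in G_i$ the intersection $H_{ij}^g\cap H_{ii}$ is cyclic, in particular elementary, by the inductive property \eqref{eq:25}. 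Lemma~\ref{lem:cyc_intersec_preserved} therefore yields that $H_j^h\cap H_i=\xi_i(H_{ij})^h\cap\xi_i(H_{ii})$ is elementary in $G$ for every $h\in G$.

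To finish, I would invoke torsion-freeness of $G$: being a direct limit of the torsion-free groups $G_i$, the group $G$ is torsion-free, and a torsion-free elementary group is cyclic (as recalled in the preliminaries). Hence $H_j^h\cap H_i$ is cyclic, which is exactly the claim.

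I do not expect a genuine obstacle here, since the analytic content has been packaged into Lemma~\ref{lem:cyc_intersec_preserved} together with the bookkeeping conditions \eqref{eq:22}, \eqref{eq:23} and \eqref{eq:25}. The only points needing a little care are choosing the ambient group to be $G_i$ with $i=\max\{i,j\}$, so that both $H_i$ and $H_j$ are images under $\xi_i$ of subgroups already living inside $Q_i$, and remembering the final step that upgrades ``elementary'' to ``cyclic'' via torsion-freeness of $G$. One could also remark that this lemma establishes claim (iv) of Theorem~\ref{thm:IG_not_FIG}.
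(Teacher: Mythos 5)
Your proposal is correct and follows essentially the same route as the paper: reduce to $j<i$, view $H_j$ and $H_i$ as $\xi_i$-images of the quasiconvex subgroups $H_{ij},H_{ii}\subseteq Q_i$ of $G_i$, and apply Lemma~\ref{lem:cyc_intersec_preserved} using Lemma~\ref{lem:conj_in_Q_i<->conj_in_G} and \eqref{eq:25}. Your explicit final step upgrading ``elementary'' to ``cyclic'' via torsion-freeness of $G$ is a welcome clarification of a point the paper leaves implicit.
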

\begin{proof}[Proof of Lemma~\ref{lem:Hi_inter_Hj}]
Without loss of generality, suppose that $j<i$. By construction, $H_j=\xi_i(H_{ij})$ and $H_i=\xi_i(H_{ii})$, where
$H_{ij}$ and $H_{ii}$ are quasiconvex subgroups of the hyperbolic group $G_i$ (see \eqref{eq:21} and \eqref{eq:22})),
and $H_{ij} \cup H_{ii} \subseteq Q_i$. Recalling Lemma~\ref{lem:conj_in_Q_i<->conj_in_G}
and \eqref{eq:25}, we see that all the assumptions of Lemma~\ref{lem:cyc_intersec_preserved} are satisfied, whence $H_j^h \cap H_i$ must be cyclic
for any $h \in G$.
\end{proof}

Thus it remains to prove claim (v). The next lemma will be used for this.

\begin{lemma}\label{lem:proper-fg_conj_into_Hi} If $N $ is a non-cyclic proper finitely generated subgroup of $G$,
 then there exists $j \in \N \cup \{0\}$ and $v \in G$ such that $N \subseteq H_j^v$.
\end{lemma}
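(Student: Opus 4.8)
The plan is to reduce the statement to property \eqref{eq:28} at an appropriate level and then use malnormality of the subgroups $H_j$ to pass from a finite index subgroup to the whole of $N$. First, since $N$ is finitely generated, I would lift a finite generating set of $N$ to $G_0$ and let $Y \leqslant G_0$ be the (finitely generated) subgroup it generates, so that $\psi(Y) = N$. In the fixed enumeration of finitely generated subgroups of $G_0$ we have $Y = Y_m$ for some $m \in \N \cup \{0\}$, and in fact $m \ge 1$, because $\psi(Y_0) = \{1\}$ is cyclic while $N$ is not. Consequently $N = \psi(Y_m) = \xi_m(\psi_m(Y_m)) = \xi_m(Z_m)$, and property \eqref{eq:28}, which holds for $i = m$, leaves three alternatives for $Z_m$. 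Since $\xi_m$ is surjective (because $\psi = \xi_m \circ \psi_m$ is), the option $Z_m = G_m$ would give $N = G$, contradicting that $N$ is proper; and $Z_m$ cyclic would make $N = \xi_m(Z_m)$ cyclic, again a contradiction. Hence $|Z_m : (Z_m \cap H_{mj}^u)| < \infty$ for some $j \in \{0,\dots,m\}$ and some $u \in G_m$.

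Next I would push this down to $G$. Put $W = Z_m \cap H_{mj}^u$, set $v = \xi_m(u) \in G$, and let $N_0 = \xi_m(W) \leqslant N$. The homomorphic image of a finite index subgroup has finite index in the image, so $[N : N_0] \le [Z_m : W] < \infty$; and $N_0 = \xi_m(W) \subseteq \xi_m(H_{mj}^u) = \xi_m(H_{mj})^v = H_j^v$, where I use that $\xi_m(H_{mj}) = H_j$ for $j \le m$. Moreover $N$, being a non-cyclic subgroup of the torsion-free group $G$, is non-trivial and hence infinite, so the finite index subgroup $N_0$ is infinite as well; in particular $N_0 \ne \{1\}$.

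Finally I would upgrade ``$N_0 \subseteq H_j^v$'' to ``$N \subseteq H_j^v$''. By claim (i) of Theorem~\ref{thm:IG_not_FIG} the subgroup $H_j$ is malnormal in $G$, and therefore so is its conjugate $M = H_j^v$. For an arbitrary $g \in N$, conjugation by $g$ is an automorphism of $N$, so $gN_0g^{-1}$ has the same finite index in $N$ as $N_0$; hence $N_0 \cap gN_0g^{-1}$ has finite index in the infinite group $N$ and is, in particular, non-trivial. As $N_0 \subseteq M$ and $gN_0g^{-1} \subseteq gMg^{-1}$, we obtain $\{1\} \ne N_0 \cap gN_0g^{-1} \subseteq M \cap gMg^{-1}$, so malnormality of $M$ forces $g \in M$. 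Since $g \in N$ was arbitrary, $N \subseteq M = H_j^v$, as required.

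The genuinely substantial input is property \eqref{eq:28} itself, which was engineered step by step during the construction of the groups $G_i$ through the choice of the subgroup $F$ and an application of Theorem~\ref{thm:small-crit}; granting it, the only real idea needed here is the standard fact that a malnormal subgroup which contains a finite index subgroup of $N$ must contain all of $N$ (when $N$ is infinite). I expect no serious obstacle beyond keeping the bookkeeping between $Z_i$, $H_{ij}$ and $\xi_i$ straight; the malnormality of the $H_j$, established earlier via Lemma~\ref{lem:malnorm_pres}, is precisely what makes this last step go through and is the reason malnormality had to be tracked throughout the induction.
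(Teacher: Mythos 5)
Your proposal is correct and follows essentially the same route as the paper's proof: identify $N=\xi_i(Z_i)$ for some $i$, rule out the first two alternatives of \eqref{eq:28} using that $N$ is proper and non-cyclic, push the finite-index inclusion into a conjugate of $H_{ij}$ down to $G$, and then use torsion-freeness (to get $N$ infinite) together with malnormality of $H_j^v$ to promote the finite-index containment to $N \subseteq H_j^v$. The only difference is that you spell out the final malnormality step (via $N_0 \cap gN_0g^{-1}$) which the paper leaves terse.
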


\begin{proof}[Proof of Lemma \ref{lem:proper-fg_conj_into_Hi}]
By construction, there must exist $i\in \N$ such that $N=\psi(Y_i)=\xi_i(Z_i)$. Since $N$ is proper and non-cyclic in $G$,
$Z_i$ will be a proper non-cyclic
subgroup of $G_i$. In view of \eqref{eq:28}, the latter implies that $|Z_i:(Z_i \cap H_{ij}^u)|< \infty$ for some $j=0,1,\dots,i$, and some $ u \in G_i$. Hence
$|N:(N \cap H_j^v)|<\infty$ in $G$, where $v=\xi_i(u) \in G$, so $H_j^v$ contains a finite index normal subgroup of $N$. Since $G$ is torsion-free
and $H_j^v$ is malnormal, $N$ must be contained in $H_j^v$.
\end{proof}

To finish the proof of Theorem \ref{thm:IG_not_FIG}, it remains to show that every proper subgroup $M<G$ is contained in a conjugate of some $H_j$,
$j \in\N\cup\{0\}$. If $M$ is cyclic, this follows from claim (iii) of the theorem, so we can suppose that $M$ is non-cyclic. Take any $x \in M\setminus\{1\}$.
By claim (iii), $x \in H_k^G$, for some $k \in \N\cup \{0\}$, and, after replacing $M$ with a conjugate, we can assume that $x \in H_k$. Then $\C_G(x)$
is cyclic by Lemma~\ref{lem:xi_j-pres_central} and Lemma~\ref{lem:elem}. Since $M$ is non-cyclic, it cannot be contained in $\C_G(x)$, so
there must exist $y \in M$ such that $N=\langle x, y \rangle\leqslant M$ is non-abelian. Hence $N \subseteq H_j^v$ for some $j\in \N\cup\{0\}$ and some $v \in G$, by Lemma~\ref{lem:proper-fg_conj_into_Hi}.

Now, consider any $z \in M$. The subgroup $P=\langle x,y,z\rangle$ is non-cyclic and proper in $G$, so, according to Lemma~\ref{lem:proper-fg_conj_into_Hi},
there exist $i \in \N\cup \{0\}$ and $u \in G$ such that $P \subseteq H_i^u$. Observe that $N \subseteq H_j^v \cap H_i^u$ is non-cyclic, hence $i=j$,
by Lemma~\ref{lem:Hi_inter_Hj}, and $H_j^u=H_j^v$, as $H_j$ is malnormal in $G$. It follows that $z \in P \subseteq H_j^v$, whence $M \subseteq H_j^v$, as required.

Thus we have shown that $G$ satisfies claim (v), so the proof of Theorem~\ref{thm:IG_not_FIG} is complete.
\end{proof}

\begin{rem}\label{rem:cyc_centr-2} It is easy to see that the group $G$ constructed in  Theorem~\ref{thm:IG_not_FIG} satisfies the following properties:
\begin{itemize}
  \item every proper subgroup of $G$ is free;
  \item $G$ is simple;
  \item the centralizers of non-trivial elements are cyclic.
\end{itemize}
\end{rem}

Similarly to Remark~\ref{rem:add_props_Thm1}, with extra work we can impose additional properties on $G$, such as lacunary hyperbolicity and being a quotient of any given (or, even, every) non-cyclic torsion-free hyperbolic group.

\end{document}